\documentclass[12pt,longbibliography]{article}

\usepackage[utf8]{inputenc} 
\usepackage[T1]{fontenc}

\usepackage{graphicx}
\usepackage{amsmath}
\usepackage{amsfonts}
\usepackage{mathrsfs}           
\usepackage{amsthm}
\usepackage{amssymb}
\usepackage{color}
\usepackage{enumerate}
\usepackage{comment}
\usepackage{bbold} 

\usepackage[a4paper, margin=2.7cm]{geometry}

\usepackage[colorlinks=true,linkcolor=blue,citecolor=blue,urlcolor=blue,breaklinks]{hyperref}

\usepackage{bbm}

\usepackage{breakurl}
\usepackage{url}

\def\C{\mathbb{C}}
\def\N{\mathbb{N}}
\def\R{\mathbb{R}}

\def\d{\,\mathrm{d}}

\newtheorem{thm}{Theorem}[section]
\newtheorem{cor}[thm]{Corollary}
\newtheorem{lem}[thm]{Lemma}
\newtheorem{prp}[thm]{Proposition}

\theoremstyle{definition}
\newtheorem{dfn}[thm]{Definition}
\theoremstyle{remark}
\newtheorem{rem}[thm]{Remark}
\theoremstyle{example}

\title{Ultracontractivity of Heat semigroups in $\mathrm{L}^{2}\left( \Omega \right)$ with non-local Robin boundary conditions
using Nash's inequality}

\author{Christoph Schwerdt$^1$}
\date{
	$^1$ Institute of Mathematics, University of Rostock,\\
	Ulmenstra\ss e 69, 18 057 Rostock, Germany \\
	\ \\
	\today
}

\begin{document}

\maketitle

\begin{abstract}
We study heat equations $\frac{\partial u}{\partial t} - \operatorname{div} \left( A \nabla u \right) = 0$ on bounded
Lipschitz domains $\Omega$ in $\R^{d}$ for $d>2$, where $-\operatorname{div} \left( A \nabla \cdot \right)$ is a 
second-order uniformly elliptic operator with generalised Robin boundary conditions. These boundary conditions 
are formally given by $\nu \cdot A \nabla u + Bu = 0$ where $\nu$ is the outer unit normal on $\partial\Omega$ and 
$B \in \mathcal{L} \left( \mathrm{L}^{2}\left( \partial \Omega  \right) \right)$ is a general operator which
is allowed to destroy the positivity preserving property of the solution semigroup. Ultracontractivity of
the solution semigroup is shown by using Nash's inequality on the Sobolev space $H^{1}( \Omega )$. 
\end{abstract}

\tableofcontents

\section{Introduction}

Let $\Omega \subset \R^{d}$ for $d \in \N$ be a bounded Lipschitz domain and $A \colon \Omega \to \R^{d \times d}$ be
a matrix-valued function such that every coefficient $a_{ij} \colon \Omega \to \R$ is a bounded and measurable function 
on $\Omega$. Furthermore let $A$ be uniformly elliptic on $\Omega$, i.e.
$$
\exists \alpha > 0 \ \forall \xi \in \C^{d}, x \in \Omega \ : \ 
\Re \left(  \left( A(x) \xi \right)^{T} \overline{\xi} \right) \geq \alpha \left| \xi \right|^{2}. 
$$ 
We study solutions $u = u(t,x)$ to the second-order parabolic equation
\begin{equation}\label{Generalised_Heat_Equation}
\frac{\partial u}{\partial t} - \operatorname{div} \left( A \nabla u \right) = 0 \ \text{ in } \ (0,\infty) \times \Omega 
\end{equation}
with a generalised Robin boundary condtion formally given by
\begin{equation}\label{Robin_Condition}
\nu \cdot A \nabla u + B \gamma(u) = 0 \ \text{ on } \ (0,\infty) \times \partial \Omega,
\end{equation}
where $\nu$ is the outer unit normal on $\partial\Omega$ and $B$ is a linear operator on 
$\mathrm{L}^{2}\left( \partial \Omega  \right)$. In this article $a(B)$ always is a sesqulinear form 
in $\mathrm{L}^{2}\left( \Omega \right)$ formally given by
\begin{equation}\label{definition_form}
a(B)(u,v) = \int_{\Omega} A \nabla u \cdot \overline{\nabla v} \d x + 
\int_{\partial \Omega} B \gamma(u) \ \overline{\gamma(v)} \d \sigma(x)
\end{equation}
for $u,v \in D \left( a(B) \right) = H^{1}(\Omega)$ where $\gamma$ is a trace operator mapping $H^{1}(\Omega)$
into $\mathrm{L}^{2}\left( \partial \Omega \right)$. 
Let $L(B)$ be the associated operator to $a(B)$ such that $-L(B)$ generates a 
$C_{0}$-semigroup $\left( \mathrm{e}^{-tL(B)} \right)_{t \geq 0}$ in $\mathrm{L}^{2}\left( \Omega \right)$. 
Then
$$
u(t) = \mathrm{e}^{-tL(B)} u_{0}
$$
is a solution to (\ref{Generalised_Heat_Equation}) taking (\ref{Robin_Condition}) into account in the sense of Subsection 2.4
in the article \cite{Glueck_Mui_2026} by Jochen Glück and Jonathan Mui.
For details about previous articles on the topic, more information how to model heat equations with non-local boundary conditions
as well as an implication of an eventual positivity of the solution 
semigroup please see Section 1 of  \cite{Glueck_Mui_2026}. 
Here we focus on one of their main results in form of Theorem 3.2 on page 10 which reads:

\begin{thm}\label{Theorem_Jochen}
Let $\Omega$ and $A$ be as described above. Furthermore let 
$B \in \mathcal{L}\left( \mathrm{L}^{2}\left( \partial \Omega \right) \right)$ act boundedly on 
$\mathrm{L}^{1}\left( \partial \Omega \right)$ and on $\mathrm{L}^{\infty}\left( \partial \Omega \right)$.
Then the solution semigroup operators $\mathrm{e}^{-tL(B)}$ are ultracontractive in $\mathrm{L}^{2}\left( \Omega \right)$
at any time $t \in (0,1]$, i.e.
$$
\exists \ C, \mu > 0 \ \forall u \in  \mathrm{L}^{2}\left( \Omega \right) \ : \ 
\left\| \mathrm{e}^{-tL(B)} u \right\|_{ \mathrm{L}^{\infty}\left( \Omega \right)}
\leq C t^{-\frac{\mu}{4}} \left\| u \right\|_{ \mathrm{L}^{2}\left( \Omega \right)} 
$$
is true for any time $t \in (0,1]$
\end{thm}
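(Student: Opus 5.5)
The plan is to prove the estimate through the $\mathrm{L}^{1}$–$\mathrm{L}^{2}$ route: a Nash-type differential inequality for $\|\mathrm{e}^{-tL(B)}u\|_{2}^{2}$ along trajectories gives a bound $\mathrm{L}^{1}\to\mathrm{L}^{2}$, and the same argument for the adjoint semigroup, combined with duality, gives $\mathrm{L}^{2}\to\mathrm{L}^{\infty}$. Recall that $\mathrm{e}^{-tL(B)}$ need not be positive, so the Beurling–Deny route is unavailable. The exponent $\mu$ will come from Nash's inequality on $H^{1}(\Omega)$, which holds for bounded Lipschitz domains via a bounded extension operator:
\[
\|u\|_{2}^{2+4/\mu}\le C_{N}\bigl(\|\nabla u\|_{2}^{2}+\|u\|_{2}^{2}\bigr)\|u\|_{1}^{4/\mu},\qquad \mu=d .
\]

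\emph{Step 1: quasi-coercivity of the form.} By the trace inequality $\|\gamma u\|_{\mathrm{L}^{2}(\partial\Omega)}^{2}\le \varepsilon\|\nabla u\|_{2}^{2}+C_{\varepsilon}\|u\|_{2}^{2}$ and the boundedness of $B$ on $\mathrm{L}^{2}(\partial\Omega)$, there are $\omega\ge0$ and $c>0$ such that
\[
\Re a(B)(u,u)+\omega\|u\|_{2}^{2}\ge c\bigl(\|\nabla u\|_{2}^{2}+\|u\|_{2}^{2}\bigr).
\]
Set $T(t)=\mathrm{e}^{-\omega t}\mathrm{e}^{-tL(B)}$. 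Because only $t\in(0,1]$ matters, the factor $\mathrm{e}^{\omega t}$ is harmless.

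\emph{Step 2: $\mathrm{L}^{1}$-boundedness of the semigroup.} I expect this to be the main obstacle. We need $\|T(t)u\|_{1}\le M\|u\|_{1}$ for $t\in[0,1]$. Positivity fails, so I would use perturbation. Split $B=B_{0}+(B-B_{0})$, or simply treat $a(B)=a(0)+b$ with $b(u,v)=\int_{\partial\Omega}B\gamma u\,\overline{\gamma v}$. The Neumann semigroup $\mathrm{e}^{-tL(0)}$ is sub-Markovian, hence contractive on $\mathrm{L}^{1}$ and $\mathrm{L}^{\infty}$. I would then use the Duhamel/Miyadera-type formula
\[
\mathrm{e}^{-tL(B)}=\mathrm{e}^{-tL(0)}-\int_{0}^{t}\mathrm{e}^{-(t-s)L(0)}\gamma^{*}B\gamma\,\mathrm{e}^{-sL(B)}\,ds,
\]
where $\gamma^{*}$ maps boundary functions to the measure $f\,d\sigma$. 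This requires showing that $\mathrm{e}^{-rL(0)}\gamma^{*}$ maps $\mathrm{L}^{1}(\partial\Omega)\to\mathrm{L}^{1}(\Omega)$ with norm $\lesssim r^{-1/2}$, and that $\gamma\,\mathrm{e}^{-sL(0)}$ maps $\mathrm{L}^{1}(\Omega)\to \mathrm{L}^{1}(\partial\Omega)$ with an integrable singularity; the latter comes from Gaussian bounds for the Neumann heat kernel on Lipschitz domains. A Volterra iteration, using that $B$ is bounded on $\mathrm{L}^{1}(\partial\Omega)$, then yields $\sup_{t\le1}\|\mathrm{e}^{-tL(B)}\|_{1\to1}<\infty$. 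The same argument applied to $B^{*}$, which is bounded on $\mathrm{L}^{1}$ because $B$ is bounded on $\mathrm{L}^{\infty}$, handles the adjoint semigroup generated by $a(B^{*})$.

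\emph{Step 3: Nash argument and duality.} For $u\in\mathrm{L}^{2}\cap\mathrm{L}^{1}$, set $\varphi(t)=\|T(t)u\|_{2}^{2}$. By Step 1 and Nash's inequality,
\[
\varphi'(t)=-2\Re\bigl(a(B)+\omega\bigr)(T(t)u,T(t)u)\le -2c\,C_{N}^{-1}\varphi^{1+2/\mu}\,(M\|u\|_{1})^{-4/\mu}.
\]
Integrating this inequality gives $\|T(t)\|_{1\to2}\le C t^{-\mu/4}$. The same bound holds for $T(t)^{*}$ by Step 2 for $B^{*}$. Duality gives $\|T(t)\|_{2\to\infty}=\|T(t)^{*}\|_{1\to2}\le Ct^{-\mu/4}$, and multiplying back by $\mathrm{e}^{\omega t}\le \mathrm{e}^{\omega}$ yields the claim for $t\in(0,1]$.
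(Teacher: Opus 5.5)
Your argument is sound in outline, and its second half matches the paper's: quasi-coercivity from the trace inequality, Nash's inequality along trajectories of the adjoint semigroup to get an $\mathrm{L}^{1}\to\mathrm{L}^{2}$ bound, and the duality $\|T(t)\|_{2\to\infty}=\|T(t)^{\ast}\|_{1\to 2}$. Your pairing of coercivity in the full $H^{1}$-norm with the inhomogeneous Nash inequality is the right one on a bounded domain; the homogeneous version in the paper's appendix fails for constants.

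The routes differ in how $\mathrm{L}^{1}$-boundedness of $\mathrm{e}^{-tL(B)^{\ast}}$ is obtained. The paper uses order structure rather than perturbation. It starts from a positive $B_{pos}$ with $|Bu|\le B_{pos}|u|$, supplied under the theorem's hypotheses by Step 1 of Gl\"uck--Mui. It builds a boundary operator $\overline{B}$ whose semigroup is positive and, by Ouhabaz's form criteria, dominates $\mathrm{e}^{-tL(B)}$. It bounds that positive semigroup on $\mathrm{L}^{\infty}(\Omega)$ by comparison with a continuous supersolution $\frac12\le h\le 2$, obtained as a resolvent of the local Robin problem with constant $-\beta$. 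It then dualizes. You instead perturb the sub-Markovian Neumann semigroup by $\gamma^{\ast}B\gamma$ and sum a Dyson--Phillips series using Gaussian kernel bounds.

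What each route buys:
\begin{itemize}
\item Yours needs no positivity and no modulus. Since only the adjoint estimate enters the duality (your $\mathrm{L}^{1}\to\mathrm{L}^{2}$ bound for $T(t)$ itself is never used), it only uses that $B$ is bounded on $\mathrm{L}^{\infty}(\partial\Omega)$.
\item The price is heavy imported machinery: Gaussian bounds up to the boundary for the Neumann kernel with real nonsymmetric measurable coefficients on Lipschitz domains, a rigorous Duhamel formula with $\gamma^{\ast}$ valued in $H^{1}(\Omega)'$, and identification of the series with the form semigroup.
\item The paper's form-and-lattice argument avoids all of this, at the cost of needing a positive dominant $B_{pos}$ that is bounded on $\mathrm{L}^{\infty}(\partial\Omega)$.
\end{itemize}

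Two details in your Step 2 need fixing. First, the higher Dyson--Phillips terms contain factors $\gamma\mathrm{e}^{-rL(0)}\gamma^{\ast}$, so you need $\|\gamma\mathrm{e}^{-rL(0)}\gamma^{\ast}\|_{\mathrm{L}^{1}(\partial\Omega)\to\mathrm{L}^{1}(\partial\Omega)}\lesssim r^{-1/2}$. This follows directly from the Gaussian bound together with $\sigma(\partial\Omega\cap B(x,\rho))\lesssim\rho^{d-1}$. It does not follow from composing your two stated bounds, which gives only $r^{-1}$, a non-integrable singularity. In fact $\mathrm{e}^{-rL(0)}\gamma^{\ast}\colon\mathrm{L}^{1}(\partial\Omega)\to\mathrm{L}^{1}(\Omega)$ is $O(1)$, not $O(r^{-1/2})$. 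Second, the adjoint semigroup comes from the form with $A^{T}$ and $B^{\ast}$, not from $a(B^{\ast})$. This is harmless, since $A^{T}$ is again real and uniformly elliptic.
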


\begin{rem} \
\begin{enumerate}[i)]
\item Note that $B$ is defined on $\mathrm{L}^{\infty}\left( \partial \Omega \right)$ since 
$\mathrm{L}^{\infty}\left( \partial \Omega \right) \subset \mathrm{L}^{2}\left( \partial \Omega \right)$ is implied
by $\Omega$ being a bounded Lipschitz domain. Demanding $B$ to act boundedly on 
$\mathrm{L}^{\infty}\left( \partial \Omega \right)$ means
that $Bu$ is contained in the subspace 
$\mathrm{L}^{\infty}\left( \partial \Omega \right) \subset \mathrm{L}^{2}\left( \partial \Omega \right)$ 
for every $u \in \mathrm{L}^{\infty}\left( \partial \Omega \right)$ and furthermore that there exists a constant 
$C>0$ such that
$$
\left\| Bu \right\|_{\mathrm{L}^{\infty}\left( \partial \Omega \right)} \ \leq \ C 
\left\| u \right\|_{\mathrm{L}^{\infty}\left( \partial \Omega \right)}
$$
is true for every $u \in \mathrm{L}^{\infty}\left( \partial \Omega \right)$.
\item Furthermore $\mathrm{L}^{2}\left( \partial \Omega \right)$ is contained in $\mathrm{L}^{1}\left( \partial \Omega \right)$
since $\Omega$ is a bounded Lipschitz domain. Hence $Bu$ is contained in $\mathrm{L}^{1}\left( \partial \Omega \right)$ 
for every $u \in \mathrm{L}^{2}\left( \partial \Omega \right)$. Demanding $B$ to act boundedly on 
$\mathrm{L}^{1}\left( \partial \Omega \right)$ means that there exists  a constant $\tilde{C}>0$ such that
$$
\left\| Bu \right\|_{\mathrm{L}^{1}\left( \partial \Omega \right)} \ \leq \ \tilde{C} 
\left\| u \right\|_{\mathrm{L}^{1}\left( \partial \Omega \right)}
$$ 
is true for every $u \in \mathrm{L}^{2}\left( \partial \Omega \right)$. Then there exists a unique extension of $B$
in $\mathcal{L}\left( \mathrm{L}^{1}\left( \partial \Omega \right) \right)$ since $\mathrm{L}^{2}\left( \partial \Omega \right)$
is a dense subset in $\mathrm{L}^{1}\left( \partial \Omega \right)$.\\
\end{enumerate}
\end{rem}

The main argument in \cite{Glueck_Mui_2026} is to gradually construct an operator 
$B_{2} \in \mathcal{L}\left( \mathrm{L}^{2}\left( \partial \Omega \right) \right)$ such that $-B_{2}$ is positive
and satisfies
$$
\left| Bu \right| \ \leq \ -B_{2} |u|
$$
pointwise almost everywhere on $\partial \Omega$
for every $u \in \mathrm{L}^{2}\left( \partial \Omega \right)$. Furthermore $-L(B_{2})$ is the generator of another $C_{0}$-semigroup
$\left( \mathrm{e}^{-tL(B_{2})} \right)_{t \geq 0}$ such that $\mathrm{e}^{-tL(B_{2})}$ dominates $\mathrm{e}^{-tL(B)}$ at
every time $t > 0$, i.e.
\begin{equation}\label{domination_argument}
\left| \mathrm{e}^{-tL(B)} u \right| \ \leq \ \mathrm{e}^{-tL(B_{2})} |u|
\end{equation}
is true almost everywhere in $\Omega$ for every $u \in \mathrm{L}^{2}\left( \Omega \right)$. Please compare with
Step 4 on page 13 in \cite{Glueck_Mui_2026}. The construction of $B_{2}$ allows to utilize the results 
of Proposition 3.4 and Lemma 3.5 both on page 11 to conclude that $\mathrm{e}^{-tL(B_{2})}$ restricts to a bounded
operator on $\mathrm{L}^{\infty}\left( \Omega \right)$ which is crucial to prove the ultracontractivity of
 $\mathrm{e}^{-tL(B)}$  in $\mathrm{L}^{2}\left( \Omega \right)$ using (\ref{domination_argument}).\\

In this article we put more focus on the ellipticity of $A$ on $\Omega$. Note that for $B$ being a bounded in 
$\mathrm{L}^{2}\left( \partial \Omega \right)$ the domain $D\left( a(B) \right)$ is defined as the 
Sobolev space $H^{1} \left( \Omega \right)$ in $\mathrm{L}^{2}\left( \Omega \right)$. Furthermore notice that if the adjoint 
operators $\mathrm{e}^{-tL(B)^{\ast}}$ in $\mathrm{L}^{2}\left( \Omega \right)$ are continuous from 
$\mathrm{L}^{1}\left( \Omega \right)$ to 
$\mathrm{L}^{2}\left( \Omega \right)$ for every time $t > 0$, then an ultracontractivity of the original
solution semigroup operators $\mathrm{e}^{-tL(B)}$ in $\mathrm{L}^{2}\left( \Omega \right)$ is implied. 
Nash's inequality beautifully connects all of the mentioned function
spaces $\mathrm{L}^{1}\left( \Omega \right)$, $\mathrm{L}^{2}\left( \Omega \right)$ and $H^{1} \left( \Omega \right)$.
Using Nash's inequality as an alternative argumentation offers the same result as Theorem \ref{Theorem_Jochen}. 
It reads as follows:

\begin{thm}
Again let $\Omega$ and $A$ be as above. Furthermore let 
$B \in \mathcal{L}\left( \mathrm{L}^{2}\left( \partial \Omega \right) \right)$.
Suppose there exists a postive operator $B_{pos} \in \mathcal{L} \left( \mathrm{L}^{2}( \partial \Omega ) \right)$ that 
satisfies
$$
\left| B u \right| \ \leq \ B_{pos} \left| u \right|
$$
for every $u \in \mathrm{L}^{2}( \partial \Omega )$. Then $\mathrm{e}^{-tL(B)}$ are ultracontractive 
operators in $\mathrm{L}^{2}\left( \Omega \right)$ at any time $t > 0$. 
In particular there are constants $C>0$ and $\mu_{0} > 0$ satisfying
$$
\left\| \mathrm{e}^{-tL(B)} u \right\|_{\mathrm{L}^{\infty} \left( \Omega \right) } \leq C t^{-\frac{d}{4}} \mathrm{e}^{t \mu_{0}} 
\left\| u \right\|_{\mathrm{L}^{2} \left( \Omega \right) }
$$
for every time $t > 0$ and any $u \in \mathrm{L}^{2}\left( \Omega \right)$.
\end{thm}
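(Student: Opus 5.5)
We follow the strategy sketched in the introduction. We reduce the statement to an $\mathrm{L}^{1}\to\mathrm{L}^{2}$ estimate for the adjoint semigroup, prove that estimate with Nash's inequality on $H^{1}(\Omega)$, and use domination to control the $\mathrm{L}^{1}$-norm along the flow.

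\textbf{Step 1 (form estimates).} Since $\Omega$ is a bounded Lipschitz domain, for every $\varepsilon>0$ there is $c_\varepsilon$ with
$$
\left\|\gamma(u)\right\|_{\mathrm{L}^{2}(\partial\Omega)}^{2}\le \varepsilon\left\|\nabla u\right\|_{\mathrm{L}^{2}(\Omega)}^{2}+c_\varepsilon\left\|u\right\|_{\mathrm{L}^{2}(\Omega)}^{2}.
$$
Together with $B\in\mathcal{L}(\mathrm{L}^{2}(\partial\Omega))$ and the uniform ellipticity of $A$, this gives some $\mu_{0}\ge 0$ with
$$
\Re\, a(B)(u,u)+\mu_{0}\left\|u\right\|_{2}^{2}\ \ge\ \frac{\alpha}{2}\left\|u\right\|_{H^{1}(\Omega)}^{2}\qquad (u\in H^{1}(\Omega)).
$$
The adjoint form is $a(B)^{\ast}(u,v)=\overline{a(B)(v,u)}=a(B^{\ast})$ with $A$ replaced by $A^{T}$. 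It satisfies the same coercivity estimate, and the adjoint semigroup is $\mathrm{e}^{-tL(B)^{\ast}}=\mathrm{e}^{-tL(B^{\ast})}$ with $A^{T}$. The domination hypothesis passes to adjoints: $|B^{\ast}u|\le B_{pos}^{\ast}|u|$ with $B_{pos}^{\ast}$ positive. So it suffices to prove, for a general operator of our class with $S(t)=\mathrm{e}^{-tL(B)}$,
$$
\left\|S(t)u\right\|_{2}\le C t^{-d/4}\mathrm{e}^{\mu_0 t}\left\|u\right\|_{1},
$$
and then dualize to get $\left\|S(t)u\right\|_{\infty}\le Ct^{-d/4}\mathrm{e}^{\mu_0 t}\left\|u\right\|_{2}$.

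\textbf{Step 2 ($\mathrm{L}^{1}$-control).} Nash's argument needs $\left\|S(t)u\right\|_{1}\le M\mathrm{e}^{\omega t}\left\|u\right\|_{1}$. We obtain it by domination, $|S(t)u|\le \mathrm{e}^{-tL(-B_{pos})}|u|$, and I expect this to be the main obstacle. The plan is to verify the Ouhabaz criterion for domination of semigroups: the form domain $H^{1}(\Omega)$ is an ideal of itself, and one needs
$$
\Re\, a(B)(u,\operatorname{sgn}u\, |v|)\ \ge\ a(-B_{pos})(|u|,|v|)
$$
for suitable $u,v$. The gradient parts are handled as in the Neumann case. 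The boundary part reduces to
$$
\Re\int_{\partial\Omega} B\gamma u\,\overline{\operatorname{sgn}\gamma u}\,|\gamma v|\d\sigma\ \ge\ -\int_{\partial\Omega}B_{pos}|\gamma u|\,|\gamma v|\d\sigma,
$$
which follows from $|Bu|\le B_{pos}|u|$. Next we show that $T(t)=\mathrm{e}^{-tL(-B_{pos})}$ is positive and $\mathrm{L}^{1}$-exponentially bounded. Since $-B_{pos}\le 0$ need not be sub-Markovian, I would first try the following: bound $T(t)$ by a shift $\mathrm{e}^{\omega t}$ times a semigroup that contracts in $\mathrm{L}^{\infty}$ for the adjoint form, for instance via the Beurling--Deny criterion applied to the truncation $(u-k)^{+}$ with a suitable weight. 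If that fails, I would use the invariance of $\{\|u\|_{\infty}\le \mathrm{e}^{\omega t}\}$ together with extrapolation. The operator $B_{pos}$ is only bounded on $\mathrm{L}^{2}$, so some care is needed here; possibly $\mathrm{L}^{1}$-boundedness can instead be extracted directly from the $\mathrm{L}^{2}$ theory on the bounded set $\Omega$ together with positivity.

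\textbf{Step 3 (Nash).} The Nash inequality on the Lipschitz domain, obtained via an extension operator, reads
$$
\left\|u\right\|_{2}^{2+4/d}\le C_{N}\left\|u\right\|_{H^{1}(\Omega)}^{2}\left\|u\right\|_{1}^{4/d}.
$$
Fix $u_{0}$ with $\left\|u_{0}\right\|_{1}=1$ and set $w(t)=\mathrm{e}^{-\mu_0 t}S(t)u_0$. By Step 1,
$$
\frac{\mathrm{d}}{\mathrm{d}t}\left\|w\right\|_{2}^{2}=-2\,\Re\big(a(B)(w,w)+\mu_0\left\|w\right\|_{2}^{2}\big)\le-\alpha\left\|w\right\|_{H^{1}}^{2}.
$$
Combining this with Nash and the $\mathrm{L}^{1}$ bound from Step 2 (absorb $\omega$ into $\mu_0$) gives
$$
\frac{\mathrm{d}}{\mathrm{d}t}\left\|w\right\|_{2}^{2}\le -c\left(\left\|w\right\|_{2}^{2}\right)^{1+2/d}.
$$
Integrating this differential inequality yields $\left\|w(t)\right\|_{2}^{2}\le C t^{-d/2}$, that is, $\left\|S(t)\right\|_{1\to2}\le Ct^{-d/4}\mathrm{e}^{\mu_0 t}$. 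Applying the same bound to the adjoint semigroup and dualizing as in Step 1 finishes the proof.
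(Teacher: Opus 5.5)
Your architecture matches the paper's: a shifted coercive form, domination by a positive semigroup to control $\mathrm{L}^{1}$-norms, Nash's inequality along the flow, and duality. Steps~1 and~3 are fine, and so is the domination $|S(t)u|\le \mathrm{e}^{-tL(-B_{pos})}|u|$. Your Nash inequality with the full $H^{1}$-norm is in fact the correct version on a bounded domain, since constants violate the gradient-only one.

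The genuine gap is the second half of Step~2, the only non-routine point. You never establish an exponential $\mathrm{L}^{1}$-bound for the dominating semigroup. What the final duality actually needs is this bound for the adjoint problem, which by duality is an $\mathrm{L}^{\infty}$-bound for $\mathrm{e}^{-tL(-B_{pos})}$. You only list strategies, and none can succeed if all you know is that $B_{pos}$ is positive and bounded on $\mathrm{L}^{2}(\partial\Omega)$; in particular ``$\mathrm{L}^{2}$-theory plus positivity'' gives nothing.

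Here is a counterexample. Let $\Omega$ be a ball in $\R^{3}$, $A=I$, $Bu=\langle u,g\rangle g$ and $B_{pos}u=\langle u,|g|\rangle|g|$. Choose $g\in\mathrm{L}^{2}(\partial\Omega)$ real such that the solution $w_{g}$ of $-\Delta w_{g}+w_{g}=0$, $\partial_{\nu}w_{g}=g$ is unbounded. This is possible because the Neumann-to-Dirichlet map sends $\mathrm{L}^{2}(\partial\Omega)$ onto $H^{1}(\partial\Omega)\not\subset\mathrm{L}^{\infty}$.
\begin{itemize}
\item Suppose $\mathrm{e}^{-tL(B)}$ mapped $\mathrm{L}^{2}$ into $\mathrm{L}^{\infty}$. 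Then for $w=\mathrm{e}^{-tL(B)}u$ both $w$ and $L(B)w$ are bounded, by analyticity.
\item The function $w+\langle\gamma w,g\rangle w_{g}$ solves a homogeneous Neumann problem with right-hand side in $\mathrm{L}^{6}$, hence is bounded.
\item Since $w_{g}$ is unbounded, $\langle\gamma\,\mathrm{e}^{-tL(B)}u,g\rangle=0$ for all $u$ and $t>0$.
\item Letting $t\to0$ for $u$ in the dense set $D(L(B))\subset H^{1}(\Omega)$ gives $g\perp\gamma(H^{1}(\Omega))$, i.e.\ $g=0$.
\end{itemize}
Since $L(B)$ is self-adjoint here, your Step~3 then shows that no exponential $\mathrm{L}^{1}$-bound holds, neither for $\mathrm{e}^{-tL(B)}$ nor for its dominating semigroup. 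So the statement fails under the hypotheses as literally formulated.

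The missing ingredient is the hypothesis that $B_{pos}$ acts boundedly on $\mathrm{L}^{\infty}(\partial\Omega)$. The paper does use it: it is assumption ii) in the subsections on domination and on $\mathrm{L}^{\infty}$-bounds, silently carried into Corollary~\ref{L1_Nash}. The paper combines it with a concrete weight, which realizes your ``suitable weight'' idea.
\begin{itemize}
\item Set $\beta=4\|B_{pos}\|_{\mathrm{L}^{\infty}\to\mathrm{L}^{\infty}}$ and $h=\mu_{0}(\mu_{0}+L(-\beta))^{-1}\mathbb{1}$. For large $\mu_{0}$ this satisfies $\frac12\le h\le2$, by the $C(\overline{\Omega})$-theory of Robin semigroups.
\item Replace $-B_{pos}$ by $\overline{B}=-B_{pos}-m$, where $m=(\beta h-B_{pos}h)/h\ge0$ is a bounded multiplier on $\partial\Omega$. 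This is how the paper's additive term must be read for $\overline{B}$ to be linear.
\item Then $\overline{B}h=-\beta h$, so $L(\overline{B})h=L(-\beta)h=\mu_{0}(\mathbb{1}-h)$.
\item Positivity gives $\mathrm{e}^{-tL(\overline{B})}h\le\mathrm{e}^{\mu_{0}t}h$, hence $\|\mathrm{e}^{-tL(\overline{B})}\|_{\mathrm{L}^{\infty}\to\mathrm{L}^{\infty}}\le4\mathrm{e}^{\mu_{0}t}$.
\item Since $-\overline{B}=B_{pos}+m$ with $m\ge0$, your domination argument yields $|\mathrm{e}^{-tL(B)}u|\le\mathrm{e}^{-tL(\overline{B})}|u|$.
\item Duality then gives $\|\mathrm{e}^{-tL(B)^{\ast}}\|_{\mathrm{L}^{1}\to\mathrm{L}^{1}}\le4\mathrm{e}^{\mu_{0}t}$, exactly the input your Step~3 needs for the adjoint semigroup.
\end{itemize}
With this hypothesis added and this construction in place of your tentative Step~2, your proof goes through.
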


\begin{rem} 
If $B \in \mathcal{L}\left( \mathrm{L}^{2}\left( \partial \Omega \right) \right)$ acts boundedly on 
$\mathrm{L}^{1}\left( \partial \Omega \right)$ and on $\mathrm{L}^{\infty}\left( \partial \Omega \right)$, then
the existence of such an operator $B_{pos} \in \mathcal{L} \left( \mathrm{L}^{2}( \partial \Omega ) \right)$ with
$$
\left| B u \right| \ \leq \ B_{pos} \left| u \right|
$$
for every $u \in \mathrm{L}^{2}( \partial \Omega )$ is implied by Step 1 on page 12 in \cite{Glueck_Mui_2026}. 
\end{rem}

Finally let us give some motivation by refering to the second main result in \cite{Glueck_Mui_2026} in form of 
Theorem 4.4 on page 18 which reads:

\begin{thm}\label{Theorem_Jochen_2}
Let $\Omega$ and $A$ be as above. Furthermore let 
$B \in \mathcal{L}\left( \mathrm{L}^{2}\left( \partial \Omega \right) \right)$ act boundedly on 
$\mathrm{L}^{1}\left( \partial \Omega \right)$ and on $\mathrm{L}^{\infty}\left( \partial \Omega \right)$. Furthermore
let $B + B^{\ast}$ be positive semi-definte and $B 1_{\partial \Omega} = 0$. Then there exists $t_{0} \geq 0$ and
a constant $\delta > 0$ such that 
$$
\mathrm{e}^{-tL_{B}} u \ \geq \ \delta \left( \int_{\Omega} u \d x \right)
$$
is true almost everywhere in $\Omega$ for every $t \geq t_{0}$ and every $0 \leq u \in \mathrm{L}^{2} \left( \Omega \right)$.
In particular, $\left( \mathrm{e}^{-tL_{B}} \right)_{t \geq 0}$ is uniformly eventually positive.
\end{thm}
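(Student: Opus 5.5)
The plan is to show that $\mathrm{e}^{-tL_{B}}$ converges to the rank-one projection $Pu = \frac{1}{|\Omega|}\left(\int_{\Omega} u \d x\right) 1_{\Omega}$, first in $\mathcal{L}\left(\mathrm{L}^{2}(\Omega)\right)$ and then in operator norm from $\mathrm{L}^{1}(\Omega)$ to $\mathrm{L}^{\infty}(\Omega)$; the $\mathrm{L}^{1}$–$\mathrm{L}^{\infty}$ upgrade comes from Theorem \ref{Theorem_Jochen}. Throughout, $\Omega$ is taken to be connected (a domain).

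\textbf{Step 1 (kernels).} Since $B+B^{\ast}$ is positive semi-definite and
$$\left\langle (B+B^{\ast})1_{\partial\Omega},1_{\partial\Omega}\right\rangle = 2\Re\left\langle B1_{\partial\Omega},1_{\partial\Omega}\right\rangle = 0,$$
the Cauchy–Schwarz inequality for positive semi-definite operators gives $(B+B^{\ast})1_{\partial\Omega}=0$. Hence also $B^{\ast}1_{\partial\Omega}=0$.

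Consequently $a(B)(1_{\Omega},v)=0$ and $a(B)(v,1_{\Omega})=\langle \gamma v, B^{\ast}1_{\partial\Omega}\rangle =0$ for all $v\in H^{1}(\Omega)$. So $1_{\Omega}$ lies in $\ker L_{B}$ and in $\ker L_{B}^{\ast}$.

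Moreover $\Re\, a(B)(u,u)\geq \alpha\|\nabla u\|_{2}^{2}\geq 0$. Thus the form is accretive and the semigroup is contractive on $\mathrm{L}^{2}(\Omega)$.

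\textbf{Step 2 (spectral gap in $\mathrm{L}^{2}$).} The form is continuous and sectorial, so $-L_{B}$ generates an analytic semigroup. The embedding $H^{1}(\Omega)\hookrightarrow \mathrm{L}^{2}(\Omega)$ is compact, so $L_{B}$ has compact resolvent, and its spectrum consists of isolated eigenvalues in a sector in the closed right half-plane.

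Suppose $L_{B}u=i\beta u$ with $\beta\in\R$. Taking real parts gives $\alpha\|\nabla u\|_{2}^{2}\leq \Re\, a(B)(u,u)=0$. So $u$ is constant, and then $\beta=0$. Hence $\sigma(L_{B})\cap i\R=\{0\}$, with eigenspace $\operatorname{span}\{1_{\Omega}\}$.

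The eigenvalue $0$ is semisimple, since the semigroup is bounded. Its spectral projection is $P$, because the kernel of the adjoint is also spanned by $1_{\Omega}$ and $\langle 1,1\rangle\neq 0$.

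By analyticity and the fact that the rest of the spectrum lies in $\{\Re\lambda\geq\varepsilon\}$ for some $\varepsilon>0$, we obtain
$$\left\|\mathrm{e}^{-tL_{B}}-P\right\|_{\mathcal{L}(\mathrm{L}^{2})}\leq M\mathrm{e}^{-\varepsilon t}.$$
I expect this to be the main technical point. It requires care to check that compact resolvent plus sectoriality really gives a uniform gap, rather than only a pointwise one.

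\textbf{Step 3 ($\mathrm{L}^{1}\to\mathrm{L}^{\infty}$ bounds).} By Theorem \ref{Theorem_Jochen}, $\mathrm{e}^{-L_{B}}$ maps $\mathrm{L}^{2}(\Omega)$ into $\mathrm{L}^{\infty}(\Omega)$ boundedly.

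By duality, $B^{\ast}$ also acts boundedly on $\mathrm{L}^{1}(\partial\Omega)$ and on $\mathrm{L}^{\infty}(\partial\Omega)$. So Theorem \ref{Theorem_Jochen} applies to $L_{B^{\ast}}=L_{B}^{\ast}$ as well. Dualizing, $\mathrm{e}^{-L_{B}}$ extends boundedly from $\mathrm{L}^{1}(\Omega)$ to $\mathrm{L}^{2}(\Omega)$.

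Since $P\mathrm{e}^{-sL_{B}}=\mathrm{e}^{-sL_{B}}P=P$, we have, for $t\geq 2$,
$$\mathrm{e}^{-tL_{B}}-P=\mathrm{e}^{-L_{B}}\left(\mathrm{e}^{-(t-2)L_{B}}-P\right)\mathrm{e}^{-L_{B}}.$$
This yields
$$\left\|\mathrm{e}^{-tL_{B}}-P\right\|_{\mathcal{L}(\mathrm{L}^{1},\mathrm{L}^{\infty})}\leq C^{2}M\mathrm{e}^{-\varepsilon(t-2)}.$$

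\textbf{Step 4 (conclusion).} Choose $t_{0}\geq 2$ so that the right-hand side is at most $\frac{1}{2|\Omega|}$ for all $t\geq t_{0}$. Let $0\leq u\in\mathrm{L}^{2}(\Omega)$.

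The function $Pu$ equals the constant $\frac{1}{|\Omega|}\int_{\Omega}u \d x$, and $\|u\|_{1}=\int_{\Omega}u \d x$. Hence, almost everywhere,
$$\mathrm{e}^{-tL_{B}}u\geq Pu-\left|\left(\mathrm{e}^{-tL_{B}}-P\right)u\right|\geq \frac{1}{2|\Omega|}\int_{\Omega}u\d x .$$
Here, if $B$ is complex, the inequality is read for the real part. So the claim holds with $\delta=\frac{1}{2|\Omega|}$, which gives uniform eventual positivity.
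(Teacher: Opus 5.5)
Your proof is correct. The paper contains no proof of this statement to compare it with. The theorem is quoted from Gl\"uck--Mui purely as motivation, with the single comment that the boundedness of $B$ on $\mathrm{L}^{1}(\partial\Omega)$ and $\mathrm{L}^{\infty}(\partial\Omega)$ is needed only to obtain ultracontractivity.

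Your argument makes that comment precise. The conditions $B+B^{\ast}\geq 0$ and $B1_{\partial\Omega}=0$ alone give the $\mathrm{L}^{2}$ picture: contractivity, $0$ as a simple dominant eigenvalue with spectral projection $P=|\Omega|^{-1}\langle\cdot,1_{\Omega}\rangle 1_{\Omega}$, and exponential convergence $\mathrm{e}^{-tL_{B}}\to P$. The $\mathrm{L}^{1}$/$\mathrm{L}^{\infty}$ hypotheses, for $B$ and by duality for $B^{\ast}$, enter only through the two-sided smoothing $\mathrm{L}^{1}\to\mathrm{L}^{2}\to\mathrm{L}^{\infty}$. That smoothing upgrades the convergence to $\mathcal{L}(\mathrm{L}^{1},\mathrm{L}^{\infty})$, which is the concrete mechanism behind the abstract eventual-positivity criteria one would otherwise cite.

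A few points to tighten:

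\textbf{The gap in Step 2.} The uniform gap you worry about is automatic. From $|a(B)(u,u)|\leq M\|u\|_{H^{1}}^{2}$ and $\Re a(B)(u,u)+\|u\|_{2}^{2}\geq\min(\alpha,1)\|u\|_{H^{1}}^{2}$, the spectrum lies in $\{\Re\lambda\geq 0,\ |\Im\lambda|\leq c(1+\Re\lambda)\}$. So the strip $0\leq\Re\lambda\leq 1$ meets the spectrum in a bounded set, which contains only finitely many eigenvalues; together with $\sigma(L_{B})\cap i\R=\{0\}$ this gives $\varepsilon>0$. The semigroup restricted to $(I-P)\mathrm{L}^{2}(\Omega)$ is analytic, so its growth bound equals its spectral bound. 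This yields the estimate with any $\varepsilon'<\varepsilon$, which is all you need.

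\textbf{The adjoint in Step 3.} $L_{B}^{\ast}$ is the operator with coefficient matrix $A^{T}$ and boundary operator $B^{\ast}$, not $L_{B^{\ast}}$ with the same $A$. Theorem \ref{Theorem_Jochen} still applies, because $A^{T}$ satisfies the same hypotheses with the same $\alpha$.

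\textbf{Connectedness.} Connectedness of $\Omega$ is genuinely necessary: for two components and $B=0$ the conclusion fails. It is implicit in ``domain''.

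\textbf{Real-valuedness.} Eventual positivity presupposes a real semigroup, i.e.\ $B$ maps real functions to real functions. Then $\mathrm{e}^{-tL_{B}}$ is real and your final inequality holds as written, with no need to pass to real parts.
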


We see on page 18 in \cite{Glueck_Mui_2026} that the only reason for demanding 
$B \in \mathcal{L}\left( \mathrm{L}^{2}\left( \partial \Omega \right) \right)$ 
to act boundedly on $\mathrm{L}^{1}\left( \partial \Omega \right)$ and on $\mathrm{L}^{\infty}\left( \partial \Omega \right)$
is to imply an ultracontractivity of $\mathrm{e}^{-tL_{B}}$.

\section{Preparations}\label{Preparations}

Let $\Omega \subset \R^{d}$ for $d \in \N$ be a bounded Lipschitz domain and $A \colon \Omega \to \R^{d \times d}$ be
a matrix-valued function such that every coefficient $a_{ij} \colon \Omega \to \R$ is a bounded and measurable function 
on $\Omega$. Furthermore let $A$ be uniformly elliptic on $\Omega$, i.e.
$$
\exists \alpha > 0 \ \forall \xi \in \C^{d}, x \in \Omega \ : \ \Re \left(  \left( A(x) \xi \right)^{T} \overline{\xi} \right) 
\geq \alpha \left| \xi \right|^{2}. 
$$
Let the boundary condition (\ref{Robin_Condition}) be characterized by a bounded operator 
$B$ in $ \mathrm{L}^{2}\left( \partial \Omega \right)$ and a continuous trace operator 
$\gamma \colon H^{1}(\Omega) \to \mathrm{L}^{2}(\partial \Omega)$. Similar to \cite{Glueck_Mui_2026} we also cite
an important result of \cite{Gesztesy_Mitrea_2008} in form of Lemma 2.5 on page 10.

\begin{lem}\label{gamma}
For every $\varepsilon > 0$ there exists a constant $\beta(\varepsilon) > 0$ such that
$$
\| \gamma(u) \|_{\mathrm{L}^{2}(\partial \Omega)} \ \leq \ \varepsilon \| \nabla u \|_{\mathrm{L}^{2}(\Omega)^{d}}^{2}
+ \beta(\varepsilon) \| u \|_{\mathrm{L}^{2}(\Omega)}^{2}
$$
is true for every $u \in H^{1}(\Omega)$.
\end{lem}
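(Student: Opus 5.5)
We read the left-hand side as $\|\gamma(u)\|_{\mathrm{L}^{2}(\partial\Omega)}^{2}$, since the inequality only makes sense with matching homogeneity. The plan is a direct divergence-theorem argument with a transversal vector field, followed by Young's inequality.

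\emph{Step 1: a transversal vector field.} Since $\Omega$ is a bounded Lipschitz domain, there is a vector field $h \in C^{\infty}_{c}(\R^{d};\R^{d})$ and a constant $\kappa>0$ such that $h\cdot\nu \geq \kappa$ $\sigma$-a.e.\ on $\partial\Omega$. This is a classical fact, e.g.\ Grisvard, Lemma 1.5.1.9. It is built locally from the Lipschitz graph charts: in each chart one takes the constant vector pointing against the graph direction, which is uniformly transversal because the graph has bounded slope. These local fields are then glued together with a smooth partition of unity.

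\emph{Step 2: an identity for smooth $u$.} Take $u \in C^{\infty}(\overline{\Omega})$. Apply the divergence theorem, which is valid on Lipschitz domains, to the Lipschitz vector field $|u|^{2}h$:
$$
\int_{\partial\Omega} |u|^{2}\, h\cdot\nu \d\sigma(x) = \int_{\Omega} |u|^{2}\operatorname{div} h \d x + 2\,\Re\int_{\Omega} \overline{u}\, h\cdot\nabla u \d x .
$$
Using $h\cdot\nu\geq\kappa$ and Cauchy--Schwarz, this gives
$$
\kappa\|u\|_{\mathrm{L}^{2}(\partial\Omega)}^{2} \leq \|\operatorname{div} h\|_{\infty}\|u\|_{\mathrm{L}^{2}(\Omega)}^{2} + 2\|h\|_{\infty}\|u\|_{\mathrm{L}^{2}(\Omega)}\|\nabla u\|_{\mathrm{L}^{2}(\Omega)^{d}} .
$$

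\emph{Step 3: Young's inequality.} Write $2ab \leq \eta a^{2}+\eta^{-1}b^{2}$ with $a = \|\nabla u\|$, $b=\|h\|_{\infty}\|u\|$, and choose $\eta = \kappa\varepsilon$. This yields
$$
\|u\|_{\mathrm{L}^{2}(\partial\Omega)}^{2} \leq \varepsilon\|\nabla u\|^{2} + \beta(\varepsilon)\|u\|^{2}, \qquad \beta(\varepsilon) = \kappa^{-1}\|\operatorname{div}h\|_{\infty} + \kappa^{-2}\varepsilon^{-1}\|h\|_{\infty}^{2}.
$$

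\emph{Step 4: density.} $C^{\infty}(\overline{\Omega})$ is dense in $H^{1}(\Omega)$ for Lipschitz domains, by the extension or segment property. Moreover, $\gamma$ is continuous from $H^{1}(\Omega)$ into $\mathrm{L}^{2}(\partial\Omega)$ and agrees with restriction on smooth functions. Both sides of the inequality are continuous on $H^{1}(\Omega)$, so the estimate extends to every $u\in H^{1}(\Omega)$.

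The main obstacle is Step 1, the existence of the globally transversal smooth field on a merely Lipschitz boundary. The delicate point is that $\nu$ is only defined $\sigma$-a.e., so one must check that transversality survives the gluing. This works because, in overlapping charts, each local field has positive inner product with the normal, and a positive combination of such fields keeps $h\cdot\nu\geq\kappa$.

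An alternative route, if one prefers to avoid Step 1, is an Ehrling-type compactness argument. It uses the compactness of $H^{1}(\Omega)\hookrightarrow \mathrm{L}^{2}(\Omega)$ together with the continuity of $\gamma$ on $H^{s}(\Omega)$ for some $s\in(\tfrac12,1)$. However, this gives no explicit $\beta(\varepsilon)$, so I would follow the direct argument above.
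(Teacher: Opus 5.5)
Your proof is correct. You are also right to read the left-hand side as $\|\gamma(u)\|_{\mathrm{L}^{2}(\partial\Omega)}^{2}$. The printed, unsquared version fails for $tu$ with $t\to 0$ whenever $\gamma(u)\neq 0$, and Corollary~\ref{Operator_B} in fact uses the squared norm.

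The paper gives no proof of this lemma; it imports it from Gesztesy--Mitrea. As far as I recall, the argument there is the abstract kind you list as your alternative: the trace on fractional spaces $H^{s}(\Omega)$, $s>\tfrac12$, combined with a compactness or interpolation step. Your main route is genuinely different. The Rellich-type identity for $|u|^{2}h$, with Grisvard's transversal field $h\cdot\nu\geq\kappa$, needs only three ingredients: the Lipschitz graph structure, the Gauss--Green formula for $C^{1}(\overline{\Omega})$ fields, and density of $C^{\infty}(\overline{\Omega})$ in $H^{1}(\Omega)$. It therefore avoids fractional Sobolev spaces entirely.

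It also gives the explicit constant $\beta(\varepsilon)=\kappa^{-1}\|\operatorname{div}h\|_{\infty}+\kappa^{-2}\varepsilon^{-1}\|h\|_{\infty}^{2}=O(\varepsilon^{-1})$. This is the sharp order: boundary-layer functions concentrated in a strip of width $\delta$ along $\partial\Omega$ force $\beta(\varepsilon)\gtrsim\varepsilon^{-1}$. By contrast, interpolating through $H^{s}(\Omega)$ and applying Young's inequality only yields $\beta(\varepsilon)\sim\varepsilon^{-s/(1-s)}$ with $s>\tfrac12$, and an Ehrling-type contradiction argument gives no rate at all.

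In exchange, the abstract route is shorter once the trace theorem is available. It also applies verbatim to any compact operator $H^{1}(\Omega)\to\mathrm{L}^{2}(\partial\Omega)$, without any geometric construction. Since the paper only needs the existence of some $\beta(\varepsilon)$, either approach serves.
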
 

Therefore the following corollary is a simple implication.

\begin{cor}\label{Operator_B}
For every $\varepsilon > 0$ there exists a constant $\beta(\varepsilon) > 0$ such that
\begin{align*}
& \Re \int_{\Omega} A \nabla u \cdot \overline{\nabla u} \d x + 
\Re \int_{\partial \Omega} B\gamma(u) \overline{\gamma(u)} \d \sigma(x) & \\
& \geq \ \left( \alpha - \varepsilon \| B \|_{\mathrm{L}^{2} \to \mathrm{L}^{2}} \right)  \| \nabla u \|_{\mathrm{L}^{2}(\Omega)^{d}}^{2}
- \beta(\varepsilon) \| B \|_{\mathrm{L}^{2} \to \mathrm{L}^{2}}   \| u \|_{\mathrm{L}^{2}(\Omega)}^{2} &
\end{align*}
is true for every $u \in H^{1}(\Omega)$.
\end{cor}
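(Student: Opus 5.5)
The plan is to bound the two real parts on the left-hand side separately and then add the two bounds. I read Lemma \ref{gamma} in the form in which it appears in \cite{Gesztesy_Mitrea_2008}, namely with the \emph{squared} norm $\| \gamma(u) \|_{\mathrm{L}^{2}(\partial \Omega)}^{2}$ on the left-hand side; the missing square in the displayed statement is presumably a typo, since otherwise the inequality would not be homogeneous. The squared form is also exactly what the corollary needs.

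\textbf{The volume term.} For almost every $x \in \Omega$, apply the uniform ellipticity assumption with $\xi = \nabla u(x) \in \C^{d}$. This gives
$$
\Re \left( A(x) \nabla u(x) \cdot \overline{\nabla u(x)} \right) \ \geq \ \alpha \left| \nabla u(x) \right|^{2}.
$$
Integrating over $\Omega$, and using that the real part commutes with the integral, yields
$$
\Re \int_{\Omega} A \nabla u \cdot \overline{\nabla u} \d x \ \geq \ \alpha \| \nabla u \|_{\mathrm{L}^{2}(\Omega)^{d}}^{2}.
$$

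\textbf{The boundary term.} By the Cauchy--Schwarz inequality in $\mathrm{L}^{2}(\partial \Omega)$ and the boundedness of $B$,
$$
\Re \int_{\partial \Omega} B\gamma(u) \overline{\gamma(u)} \d \sigma(x) \ \geq \ - \left| \langle B \gamma(u), \gamma(u) \rangle \right| \ \geq \ - \| B \|_{\mathrm{L}^{2} \to \mathrm{L}^{2}} \| \gamma(u) \|_{\mathrm{L}^{2}(\partial \Omega)}^{2}.
$$
Now fix $\varepsilon > 0$ and take $\beta(\varepsilon)$ from Lemma \ref{gamma}. Inserting
$$
\| \gamma(u) \|_{\mathrm{L}^{2}(\partial \Omega)}^{2} \ \leq \ \varepsilon \| \nabla u \|_{\mathrm{L}^{2}(\Omega)^{d}}^{2} + \beta(\varepsilon) \| u \|_{\mathrm{L}^{2}(\Omega)}^{2}
$$
into this estimate gives
$$
\Re \int_{\partial \Omega} B\gamma(u) \overline{\gamma(u)} \d \sigma(x) \ \geq \ - \varepsilon \| B \|_{\mathrm{L}^{2} \to \mathrm{L}^{2}} \| \nabla u \|_{\mathrm{L}^{2}(\Omega)^{d}}^{2} - \beta(\varepsilon) \| B \|_{\mathrm{L}^{2} \to \mathrm{L}^{2}} \| u \|_{\mathrm{L}^{2}(\Omega)}^{2}.
$$
Adding this to the bound for the volume term gives precisely the claimed inequality, with the same constant $\beta(\varepsilon)$ as in Lemma \ref{gamma}.

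There is no genuine obstacle in this argument. The only point requiring care is the use of the squared trace estimate, as explained at the start.
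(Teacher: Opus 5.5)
Your proof is correct and follows the paper's argument: you bound the boundary term from below by Cauchy--Schwarz, the boundedness of $B$ and the trace estimate of Lemma \ref{gamma}, bound the volume term by uniform ellipticity, and add the two. Your reading of Lemma \ref{gamma} with $\| \gamma(u) \|_{\mathrm{L}^{2}(\partial \Omega)}^{2}$ on the left-hand side is also the version the paper's own proof implicitly uses.
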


\begin{proof}
Let $\varepsilon > 0$ be arbitrary but fixed. First we use Lemma \ref{gamma} to argue that
\begin{align*}
- \Re \int_{\partial \Omega} B\gamma(u) \overline{\gamma(u)} \d \sigma(x) & 
\leq \left| \int_{\partial \Omega} B\gamma(u) \overline{\gamma(u)} \d \sigma(x) \right| \leq 
\| B \|_{\mathrm{L}^{2} \to \mathrm{L}^{2}} \| \gamma(u) \|_{\mathrm{L}^{2}(\partial \Omega)}^{2} & \\
\ \\
& \leq \varepsilon \| B \|_{\mathrm{L}^{2} \to \mathrm{L}^{2}}  \| \nabla u \|_{\mathrm{L}^{2}(\Omega)^{d}}^{2}
+ \beta(\varepsilon) \| B \|_{\mathrm{L}^{2} \to \mathrm{L}^{2}}   \| u \|_{\mathrm{L}^{2}(\Omega)}^{2} &
\end{align*}
is true for every $u \in H^{1}(\Omega)$. Furthermore 
$$
\Re \int_{\Omega} A \nabla u \cdot \overline{\nabla u} \d x \ \geq \ \alpha \| \nabla u \|_{\mathrm{L}^{2}(\Omega)^{d}}^{2}
$$
holds for $u \in H^{1}(\Omega)$ which proves the claim.
\end{proof}

\subsection{The $C_{0}$-semigroup $\mathrm{e}^{-tL_{B}}$ in $\mathrm{L}^{2}\left( \Omega \right)$
for $B \in \mathcal{L}\left( \mathrm{L}^{2}\left( \partial \Omega \right) \right)$} \label{form_a}

We set $\varepsilon_{0} = \frac{\alpha}{2}\| B \|_{\mathrm{L}^{2} \to \mathrm{L}^{2}}^{-1}$ and 
$\lambda_{0} \geq \beta(\varepsilon_{0}) \| B \|_{\mathrm{L}^{2} \to \mathrm{L}^{2}}$ arbitrary but fixed
both in the sense of Corollary \ref{Operator_B}. 

\begin{dfn}
Define an auxiliary sesquilinear form $\tilde{a}(B)$ in $\mathrm{L}^{2}\left( \Omega \right)$ by
$$
\tilde{a}(B)(u,v) = \int_{\Omega} A \nabla u \cdot \overline{\nabla v} \d x + 
\int_{\partial \Omega} B \gamma(u) \ \overline{\gamma(v)} \d \sigma(x) + \lambda_{0} \int_{\Omega} u \ \overline{v} \d x
$$
for $u,v \in D \left( \tilde{a}(B) \right) = H^{1}\left( \Omega \right)$ and the form $a(B)$ by
\begin{equation}\label{form_a_B}
a(B)(u,v) = \int_{\Omega} A \nabla u \cdot \overline{\nabla v} \d x + 
\int_{\partial \Omega} B \gamma(u) \ \overline{\gamma(v)} \d \sigma(x)
\end{equation}
for $u,v \in D \left( a(B) \right) = D \left( \tilde{a}(B) \right)$.
\end{dfn}

\begin{rem} \label{a(B)_Nash}
While $a(B)$ might not be accretive as demanded in Appendix \ref{results_by_Ouhabaz}, 
the auxiliary form $\tilde{a}(B)$ on the other side is due to Corollary \ref{Operator_B}
since
\begin{equation}
\Re \tilde{a}(B)(u,u) \geq  \frac{\alpha}{2} \| \nabla u \|_{\mathrm{L}^{2}(\Omega)^{d}}^{2} \geq 0
\end{equation}
holds for every $u \in D \left( \tilde{a}(B) \right)$. 
\end{rem}

Let us show the continuity and 
closedness of the auxiliary form $\tilde{a}(B)$ next. We define the induced norm of $\tilde{a}(B)$ by
$$
\| u \|_{\tilde{a}(B)} = \left( \Re \tilde{a}(B)(u,u) +  \| u \|_{\mathrm{L}^{2}\left( \Omega \right)}^{2} \right)^{\frac{1}{2}}
$$
for $u \in D \left( \tilde{a}(B) \right)$. Of course
\begin{equation}\label{H1_inequ}
\| u \|_{\tilde{a}(B)} \ = \ \left( \Re \tilde{a}(B)(u,u) +  \| u \|_{\mathrm{L}^{2}\left( \Omega \right)}^{2} \right)^{\frac{1}{2}}
\ \geq \ \min \left( \frac{\alpha}{2}, 1 \right)^{1/2} \| u \|_{H^{1}(\Omega)}
\end{equation}
is an obvious implication.

\begin{lem}\label{form_properties}
The form $\tilde{a}(B)$ is continuous and closed.
\end{lem}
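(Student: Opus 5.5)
The plan is to show that the norm $\|\cdot\|_{\tilde{a}(B)}$ is equivalent to the $H^{1}(\Omega)$-norm on $D(\tilde{a}(B)) = H^{1}(\Omega)$; both continuity and closedness then follow almost immediately. The lower bound is already available as (\ref{H1_inequ}), so the work lies in an upper bound for $|\tilde{a}(B)(u,v)|$.

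\textbf{Continuity.} Let $M = \max_{i,j} \|a_{ij}\|_{\mathrm{L}^{\infty}(\Omega)}$. Pointwise we have $|A\nabla u \cdot \overline{\nabla v}| \leq dM |\nabla u||\nabla v|$, and Cauchy--Schwarz then bounds the first term by $dM \|\nabla u\|_{\mathrm{L}^{2}(\Omega)^{d}} \|\nabla v\|_{\mathrm{L}^{2}(\Omega)^{d}}$. For the boundary term we use boundedness of $B$ and continuity of the trace $\gamma \colon H^{1}(\Omega) \to \mathrm{L}^{2}(\partial\Omega)$, with constant $c_{\gamma}$, to get
$$
\left| \int_{\partial\Omega} B\gamma(u)\overline{\gamma(v)} \d\sigma \right| \leq \|B\|_{\mathrm{L}^{2}\to\mathrm{L}^{2}}\, c_{\gamma}^{2}\, \|u\|_{H^{1}(\Omega)}\|v\|_{H^{1}(\Omega)} .
$$
I will not use Lemma \ref{gamma} for this estimate: as printed, its left-hand side is not squared, so I rely on plain trace continuity instead. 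The zero-order term is bounded by $\lambda_{0}\|u\|_{\mathrm{L}^{2}}\|v\|_{\mathrm{L}^{2}}$. Adding the three bounds gives $|\tilde{a}(B)(u,v)| \leq K \|u\|_{H^{1}}\|v\|_{H^{1}}$. Combining this with (\ref{H1_inequ}) yields
$$
|\tilde{a}(B)(u,v)| \leq K \min(\alpha/2,1)^{-1} \|u\|_{\tilde{a}(B)}\|v\|_{\tilde{a}(B)},
$$
which is continuity with respect to the form norm, in the sense required in Appendix \ref{results_by_Ouhabaz}. If that notion instead demands a sectoriality estimate $|\Im \tilde{a}(B)(u,u)| \leq c\,\Re\tilde{a}(B)(u,u)$, I would derive it from the same bound. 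One has $|\Im \tilde{a}(B)(u,u)| \leq K\|u\|_{H^{1}}^{2}$, and Remark \ref{a(B)_Nash} controls $\|\nabla u\|^{2}$ by $\Re\tilde{a}(B)(u,u)$. The $\mathrm{L}^{2}$ part of $\|u\|_{H^{1}}^{2}$ would be handled by choosing $\lambda_{0}$ strictly larger than $\beta(\varepsilon_{0})\|B\|$, which the definition allows. If $\lambda_0$ must be kept exactly at that value, it suffices to note that Ouhabaz's continuity condition involves $\Re\tilde a + \|\cdot\|^2$, and the first estimate above already covers that case.

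\textbf{Closedness.} Combining (\ref{H1_inequ}) with the continuity estimate at $u=v$, which gives $\Re\tilde{a}(B)(u,u) \leq K\|u\|_{H^{1}}^{2}$, we get
$$
\min(\alpha/2,1)^{1/2}\|u\|_{H^{1}} \leq \|u\|_{\tilde{a}(B)} \leq (K+1)^{1/2}\|u\|_{H^{1}} .
$$
Hence $(D(\tilde{a}(B)), \|\cdot\|_{\tilde{a}(B)})$ is isomorphic to the Hilbert space $H^{1}(\Omega)$ and is therefore complete, which is exactly closedness of the form. Equivalently, take a sequence $(u_{n})$ that is Cauchy in $\|\cdot\|_{\tilde{a}(B)}$. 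It is Cauchy in $H^{1}(\Omega)$ and so converges to some $u \in H^{1}(\Omega)$, and by the upper bound it also converges to $u$ in the form norm.

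I expect the only real obstacle to be bookkeeping: making sure the constants are uniform, and checking that the version of continuity used in the Appendix (\ref{results_by_Ouhabaz}) matches the estimate proved. The trace continuity step is standard for bounded Lipschitz domains.
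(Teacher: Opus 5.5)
Your proposal is correct and follows essentially the same route as the paper. You bound $|\tilde{a}(B)(u,v)|$ by a constant times $\|u\|_{H^{1}(\Omega)}\|v\|_{H^{1}(\Omega)}$ using the bounded coefficients, trace continuity and $\|B\|_{\mathrm{L}^{2}\to\mathrm{L}^{2}}$, transfer this to the form norm via (\ref{H1_inequ}), and get closedness because a form-Cauchy sequence is $H^{1}$-Cauchy and the same upper bound turns $H^{1}$-convergence into form-norm convergence. Your sectoriality detour is unnecessary, since the Appendix definition of continuity is exactly the form-norm estimate you prove, with accretivity already supplied by Remark \ref{a(B)_Nash}.
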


\begin{proof} \
\begin{enumerate}[i.)]
\item Let us prove the continuity of $\tilde{a}(B)$ first. Note that 
$\| u \|_{\mathrm{L}^{2}\left( \Omega \right)} \leq \| u \|_{\tilde{a}(B)}$
is true for every $u \in D \left( \tilde{a}(B) \right)$. For $u,v \in D \left( \tilde{a}(B) \right)$ we easily conclude that
$$
\left| \tilde{a}(B)(u,v) \right| \leq \left| \langle A \nabla u, \nabla v \rangle_{\mathrm{L}^{2}\left( \Omega \right)^{d}}  \right|
+ \left| \langle B \gamma(u), \gamma(v) \rangle_{\mathrm{L}^{2}\left( \partial \Omega \right)} \right| 
+ \lambda_{0} \left| \langle u, v \rangle \right| 
$$
is true. Now we focus on 
\begin{align*}
\left| \langle A \nabla u, \nabla v \rangle_{\mathrm{L}^{2}\left( \Omega \right)^{d}}  \right| 
& = \left| \int_{\Omega} \left( A(x) (\nabla u)(x) \right)^{T}  \overline{(\nabla v)(x)} \d x  \right| & \\
& \leq \sum_{l=1}^{d} \sum_{k=1}^{d} \int_{\Omega} \left| a_{lk}(x) \right| \ \left| (\partial_{k}u)(x) \right| 
\ \left| (\partial_{l}u)(x) \right| \ \d x & \\
& \leq \max_{l,k} \left\| a_{l,k} \right\|_{\mathrm{L}^{\infty}\left( \Omega \right)}  \sum_{l=1}^{d} \sum_{k=1}^{d}
\left\| u \right\|_{H^{1}\left( \Omega \right)} \left\| v \right\|_{H^{1}\left( \Omega \right)}  & \\
& \leq d^{2}  \left\| A \right\|_{\mathrm{L}^{\infty}\left( \Omega \right)^{d \times d}} 
\left\| u \right\|_{H^{1}\left( \Omega \right)} \left\| v \right\|_{H^{1}\left( \Omega \right)}.  & 
\end{align*}
Therefore we conclude to
\begin{align*}
\left| \tilde{a}_{B}(u,v) \right| & \leq \left| \langle A \nabla u, \nabla v \rangle_{\mathrm{L}^{2}\left( \Omega \right)^{d}}  \right|
+ \left| \langle B \gamma(u), \gamma(v) \rangle_{\mathrm{L}^{2}\left( \partial \Omega \right)} \right| 
+ \lambda_{0} \left| \langle u, v \rangle \right| & \\
\ \\
& \leq d^{2} \left\| A \right\|_{\mathrm{L}^{\infty}\left( \Omega \right)^{d \times d}}  
\left\| u \right\|_{H^{1}\left( \Omega \right)} \left\| v \right\|_{H^{1}\left( \Omega \right)} 
+ \| B \|_{\mathrm{L}^{2} \to \mathrm{L}^{2}} \ 
\| \gamma (u) \|_{\mathrm{L}^{2}\left( \partial \Omega \right)} \ 
\| \gamma (v) \|_{\mathrm{L}^{2}\left( \partial \Omega \right)} & \\
& + \lambda_{0}  \| u \|_{\mathrm{L}^{2}\left( \Omega \right)} \| v \|_{\mathrm{L}^{2}\left( \Omega \right)} & \\
\ \\
& \leq \left( \ d^{2} \ \| A \|_{\mathrm{L}^{\infty}(\Omega)^{d \times d}}  +   \| B \|_{\mathrm{L}^{2} \to \mathrm{L}^{2}} \ 
\| \gamma \|_{H^{1} \to \mathrm{L}^{2}}^{2} \ \right) 
\| u \|_{H^{1}\left( \Omega \right)} \| v \|_{H^{1}\left( \Omega \right)} 
+ \lambda_{0}  \| u \|_{\mathrm{L}^{2}\left( \Omega \right)} \| v \|_{\mathrm{L}^{2}\left( \Omega \right)} & \\
\ \\
& \leq C \| u \|_{\tilde{a}(B)} \| v \|_{\tilde{a}(B)} & 
\end{align*}
for a sufficiently large constant $C>0$ independent of $u$ and $v$.
\item Now, let us show the closedness of $\tilde{a}(B)$. Therefore let $(u_{n}) \subset D \left( \tilde{a}(B) \right)$ be a  
Cauchy sequence with respect to  $\| \cdot \|_{\tilde{a}(B)}$. From $( \ref{H1_inequ} )$
we conclude that $(u_{n})$ is Cauchy in $H^{1}(\Omega)$. Hence there exists a limit
$u \in H^{1}(\Omega) = D \left( \tilde{a}(B) \right)$ of $(u_{n})$ with respect to $\| \cdot \|_{H^{1}(\Omega)}$. Using
the inequality
\begin{align*}
& \left| \tilde{a}(B)(u_{n}-u, u_{n}-u) \right| \leq & \\
& \left( \ d^{2} \ \| A \|_{\mathrm{L}^{\infty}(\Omega)^{d \times d}}  +   \| B \|_{\mathrm{L}^{2} \to \mathrm{L}^{2}} \ 
\| \gamma \|_{H^{1} \to \mathrm{L}^{2}}^{2} \ \right)  
\| u_{n}-u \|_{H^{1}\left( \Omega \right)}^{2} 
+ \lambda_{0}  \| u_{n}-u, \|_{\mathrm{L}^{2}\left( \Omega \right)}^{2} &
\end{align*}
we saw earlier in the proof of Lemma \ref{form_properties}, we conclude that $u$ is also a limit of $(u_{n})$ 
with respect to $\| \cdot \|_{\tilde{a}(B)}$. 
\end{enumerate}
\end{proof}

\begin{cor}\label{C^1_core}
The subspace $C^{1} \left( \overline{\Omega} \right)$ is dense in $D \left( \tilde{a}(B) \right)$ 
with respect to $\| \cdot \|_{\tilde{a}(B)}$. 
\end{cor}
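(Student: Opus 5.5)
The plan is to show that the form norm $\| \cdot \|_{\tilde{a}(B)}$ is equivalent to the Sobolev norm $\| \cdot \|_{H^{1}(\Omega)}$ on $D(\tilde{a}(B)) = H^{1}(\Omega)$. Once this is done, the claim reduces to the classical density of $C^{1}(\overline{\Omega})$ in $H^{1}(\Omega)$ for bounded Lipschitz domains.

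The lower bound is already available. Inequality (\ref{H1_inequ}) gives
$$
\| u \|_{\tilde{a}(B)} \geq \min\left(\tfrac{\alpha}{2},1\right)^{1/2} \| u \|_{H^{1}(\Omega)}.
$$

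For the upper bound I would reuse the continuity estimate from part i.) of the proof of Lemma \ref{form_properties}, taking $v = u$. Since $\Re \tilde{a}(B)(u,u) \leq |\tilde{a}(B)(u,u)|$, this gives
$$
\Re \tilde{a}(B)(u,u) \leq \left( d^{2} \| A \|_{\mathrm{L}^{\infty}(\Omega)^{d\times d}} + \| B \|_{\mathrm{L}^{2}\to \mathrm{L}^{2}} \| \gamma \|_{H^{1}\to \mathrm{L}^{2}}^{2} + \lambda_{0} \right) \| u \|_{H^{1}(\Omega)}^{2}.
$$
Here I use $\| u \|_{\mathrm{L}^{2}(\Omega)} \leq \| u \|_{H^{1}(\Omega)}$ and the continuity of the trace operator $\gamma$. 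Adding $\| u \|_{\mathrm{L}^{2}(\Omega)}^{2}$ and taking square roots yields $\| u \|_{\tilde{a}(B)} \leq C' \| u \|_{H^{1}(\Omega)}$ with $C'$ independent of $u$. So the two norms are equivalent on $H^{1}(\Omega)$.

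Next I would invoke the standard approximation result. For a bounded Lipschitz domain, restrictions to $\Omega$ of functions in $C_{c}^{\infty}(\R^{d})$ are dense in $H^{1}(\Omega)$. One route is a bounded extension operator $E \colon H^{1}(\Omega) \to H^{1}(\R^{d})$ (Calderón–Stein), followed by mollification and a cutoff. Another is the segment property of Lipschitz domains with local translation along the cone direction (Adams, Theorem 3.22). These restrictions lie in $C^{1}(\overline{\Omega}) \subset H^{1}(\Omega)$, so $C^{1}(\overline{\Omega})$ is dense in $H^{1}(\Omega)$ with respect to $\| \cdot \|_{H^{1}(\Omega)}$.

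Combining the two steps: given $u \in D(\tilde{a}(B))$ and $u_{n} \in C^{1}(\overline{\Omega})$ with $u_{n} \to u$ in $H^{1}(\Omega)$, the upper bound gives $\| u_{n} - u \|_{\tilde{a}(B)} \leq C' \| u_{n} - u \|_{H^{1}(\Omega)} \to 0$. This is the claim.

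The only nontrivial ingredient is this density statement for Lipschitz domains. It is where the Lipschitz assumption on $\partial\Omega$ is really used, and I would cite it rather than reprove it.
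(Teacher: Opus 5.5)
Your proposal is correct and follows the paper's route. The paper deduces the claim from the estimate $|\tilde{a}(B)(u,v)| \leq C \| u \|_{H^{1}(\Omega)} \| v \|_{H^{1}(\Omega)} + \lambda_{0} \| u \|_{\mathrm{L}^{2}(\Omega)} \| v \|_{\mathrm{L}^{2}(\Omega)}$ from Lemma \ref{form_properties}, together with the density of $C^{1}(\overline{\Omega})$ in $H^{1}(\Omega)$ for Lipschitz domains, which it leaves implicit; you make that density step explicit, and only your upper bound $\| u \|_{\tilde{a}(B)} \leq C' \| u \|_{H^{1}(\Omega)}$ is needed, so the lower bound from (\ref{H1_inequ}) is harmless but superfluous.
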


\begin{proof}
The claim is also implied by the inequality
$$
\left| \tilde{a}(B)(u,v) \right|  \leq 
C \| u \|_{H^{1}\left( \Omega \right)} \| v \|_{H^{1}\left( \Omega \right)} 
+ \lambda_{0}  \| u \|_{\mathrm{L}^{2}\left( \Omega \right)} \| v \|_{\mathrm{L}^{2}\left( \Omega \right)}
$$
for a costant $C > 0$ from the proof of Lemma \ref{form_properties}.\\
\end{proof}

\begin{dfn}
We define the associated operator to $\tilde{a}(B)$ by
\begin{align*}
& D ( \tilde{L}(B) ) = & \\
& \left\{ u \in D \left( \tilde{a}(B) \right) \ | \ \exists v \in \mathrm{L}^{2}\left( \Omega \right) : 
\tilde{a}(B) (u, \varphi) = \langle v, \varphi \rangle_{\mathrm{L}^{2}\left( \Omega \right)} \ \text{ for every } 
\varphi \in D \left( \tilde{a}(B) \right) \right\} & 
\end{align*}
as the domain of 
$\tilde{L}(B) \colon D ( \tilde{L}(B) ) \subset \mathrm{L}^{2}\left( \Omega \right) \to \mathrm{L}^{2}\left( \Omega \right)$ 
which is characterized by 
$$
\tilde{a}(B)(u, \varphi) =  \langle \tilde{L}(B)u, \varphi \rangle_{\mathrm{L}^{2}\left( \Omega \right)}
$$
for $u \in D ( \tilde{L}(B) )$ and every $\varphi \in D \left( \tilde{a}(B) \right)$. 
\end{dfn}

Using Theorem \ref{generator_semigroup} we identify $-\tilde{L}(B)$ as a generator of a $C_{0}$-semigroup 
$( \mathrm{e}^{-t\tilde{L}(B)} )_{t \geq 0}$ 
of contractions in $\mathrm{L}^{2}\left( \Omega \right)$. Let us focus on $a(B)$ and its associated operator $L(B)$ next.

\begin{lem}\label{Semigroup_adjustment}
Define the operator $L(B) = \tilde{L}(B) - \lambda_{0}$ in $\mathrm{L}^{2}\left( \Omega \right)$ on 
$D \left( L(B) \right) = D ( \tilde{L}(B) )$. Then $L(B)$ satisfies the following properties:
\begin{enumerate}[i.)]
\item $L(B)$ is associated to the form $a(B)$, i.e.
$$
a(B)(u, \varphi) =  \langle L(B)u, \varphi \rangle_{\mathrm{L}^{2}\left( \Omega \right)}
$$
holds for every $u \in D ( L(B) )$ and every $\varphi \in D \left( a(B) \right)$.
\item $-L(B)$ generates a $C_{0}$-semigroup 
$\mathrm{e}^{-tL(B)} = \mathrm{e}^{t\lambda_{0}}\mathrm{e}^{-t\tilde{L}(B)}$
in $\mathrm{L}^{2}\left( \Omega \right)$.
\end{enumerate}
\end{lem}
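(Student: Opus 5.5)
Both claims follow directly from the definition of $\tilde{L}(B)$ and from the bounded perturbation by the scalar $\lambda_{0}$; no deep argument is needed.

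\textbf{Part i.)} Let $u \in D(L(B)) = D(\tilde{L}(B))$ and $\varphi \in D(a(B)) = H^{1}(\Omega) = D(\tilde{a}(B))$. By the definition of the associated operator,
$$
\tilde{a}(B)(u,\varphi) = \langle \tilde{L}(B)u, \varphi \rangle_{\mathrm{L}^{2}(\Omega)} .
$$
Comparing the definitions of the two forms gives
$$
\tilde{a}(B)(u,\varphi) = a(B)(u,\varphi) + \lambda_{0} \langle u, \varphi \rangle_{\mathrm{L}^{2}(\Omega)} .
$$
Subtracting $\lambda_{0}\langle u,\varphi\rangle$ from both sides yields
$$
a(B)(u,\varphi) = \langle (\tilde{L}(B)-\lambda_{0})u,\varphi\rangle = \langle L(B)u,\varphi\rangle .
$$
Conversely, suppose $u \in H^{1}(\Omega)$ and $a(B)(u,\cdot) = \langle v,\cdot\rangle$ for some $v \in \mathrm{L}^{2}(\Omega)$. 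Then $\tilde{a}(B)(u,\cdot) = \langle v+\lambda_{0}u,\cdot\rangle$, so $u \in D(\tilde{L}(B))$. Hence the domain of the operator associated with $a(B)$ coincides with $D(\tilde{L}(B))$, and $L(B)$ is exactly that operator.

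\textbf{Part ii.)} By Theorem \ref{generator_semigroup}, applicable because of Remark \ref{a(B)_Nash} and Lemma \ref{form_properties}, $-\tilde{L}(B)$ generates a $C_{0}$-semigroup of contractions $T(t) = \mathrm{e}^{-t\tilde{L}(B)}$. Define
$$
S(t) = \mathrm{e}^{t\lambda_{0}}\,T(t).
$$
We verify the generator properties of $S$ directly.

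\begin{itemize}
\item \emph{Semigroup law.} $S(0)=I$, and $S(t+s) = \mathrm{e}^{(t+s)\lambda_{0}}T(t)T(s) = S(t)S(s)$.
\item \emph{Strong continuity.} $t\mapsto S(t)u$ is continuous as the product of the continuous scalar function $\mathrm{e}^{t\lambda_0}$ and the strongly continuous $T(t)u$.
\item \emph{Generator.} For $u \in \mathrm{L}^{2}(\Omega)$ write
$$
\frac{S(h)u-u}{h} = \mathrm{e}^{h\lambda_{0}}\,\frac{T(h)u-u}{h} + \frac{\mathrm{e}^{h\lambda_{0}}-1}{h}\,u .
$$
The second term tends to $\lambda_{0}u$ as $h\downarrow 0$. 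Since $\mathrm{e}^{h\lambda_0}\to 1$, the first quotient converges if and only if $(T(h)u-u)/h$ converges. Therefore the left-hand side converges if and only if $u \in D(\tilde{L}(B))$, and in that case the limit is $-\tilde{L}(B)u+\lambda_{0}u = -L(B)u$.
\end{itemize}

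Thus $-L(B)$, with domain $D(\tilde{L}(B))$, is the generator of $S$, so $\mathrm{e}^{-tL(B)} = \mathrm{e}^{t\lambda_{0}}\mathrm{e}^{-t\tilde{L}(B)}$. Note that $S$ is no longer contractive: it satisfies $\|S(t)\| \le \mathrm{e}^{t\lambda_{0}}$.

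The only point that needs any care is the domain identification in part i.). One must check both directions there, so that $L(B)$ is genuinely the operator associated with $a(B)$, and not merely an operator satisfying the form identity on some domain.
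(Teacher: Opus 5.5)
Your proposal is correct and follows the same route as the paper: part i.) rests on the identity $\tilde{a}(B)(u,\varphi)=a(B)(u,\varphi)+\lambda_{0}\langle u,\varphi\rangle$, and part ii.) on the standard rescaling $\mathrm{e}^{t\lambda_{0}}\mathrm{e}^{-t\tilde{L}(B)}$. Your version is slightly more complete. The ``if and only if'' in your difference-quotient argument shows that the generator's domain is exactly $D(\tilde{L}(B))$, and you also check the converse domain inclusion in part i.). The paper only differentiates the orbit for $u\in D(L(B))$, which shows that the generator extends $-L(B)$ but leaves the reverse inclusion implicit.
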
 

\begin{proof}
\begin{enumerate}[i.)]
\item The claim is a direct implication of
\begin{align*}
a(B)(u,v) & = \int_{\Omega} A \nabla u \cdot \overline{\nabla v} \d x + 
\int_{\partial \Omega} B \gamma(u) \ \overline{\gamma(v)} \d \sigma(x) \\
& = \tilde{a}(B)(u,v) - \lambda_{0} \int_{\Omega} u \ \overline{v} \d x.
\end{align*}
\item Note that $\mathrm{e}^{t\lambda_{0}}\mathrm{e}^{-t\tilde{L}(B)}$ indeed is a strongly continuous
operator semigroup in $\mathrm{L}^{2}\left( \Omega \right)$. For $u \in D \left( L(B) \right)$ we conclude to
$$
\frac{\partial}{\partial t} \mathrm{e}^{-tL(B)} u = \frac{\partial}{\partial t} \mathrm{e}^{t\lambda_{0}}\mathrm{e}^{-t\tilde{L}(B)} u 
= \lambda_{0} \mathrm{e}^{t\lambda_{0}}\mathrm{e}^{-t\tilde{L}(B)} u - 
\tilde{L}(B) \mathrm{e}^{t\lambda_{0}}\mathrm{e}^{-t\tilde{L}(B)} u  = - L(B) \mathrm{e}^{-tL(B)} u.
$$ 
\end{enumerate}
\end{proof}

\begin{rem} \
\begin{enumerate}[i.)]
\item
By Lemma \ref{generator_adjoint_semigroup} the adjoint operator $L(B)^{\ast}$ of 
$L(B)$ in $\mathrm{L}^{2}\left( \Omega \right)$ is associated to the adjoint form $a(B)^{\ast}$ given by
\begin{equation}
a(B)^{\ast}(u,v) = \int_{\Omega} A^{T} \nabla u \cdot \overline{\nabla v} \d x + 
\int_{\partial \Omega} B^{\ast} \gamma(u) \ \overline{\gamma(v)} \d \sigma(x)
\end{equation} 
for $u, v \in D \left( a(B)^{\ast} \right) = D \left( a(B) \right)$. Please compare with Appendix \ref{results_by_Ouhabaz} and
Proposition 2.2 (iv) on page 6 in \cite{Glueck_Mui_2026}. 
\item Furthermore Lemma \ref{generator_adjoint_semigroup} implies $-L(B)^{\ast}$ to be the generator of the 
adjoint $C_{0}$-semigroup $\mathrm{e}^{-tL(B)^{\ast}}$ in $\mathrm{L}^{2}\left( \Omega \right)$ with
$\mathrm{e}^{-tL(B)^{\ast}} = \left( \mathrm{e}^{-tL(B)} \right)^{\ast}$. 
\end{enumerate}
\end{rem}

\subsection{Proving $\mathrm{e}^{-tL(B)^{\ast}} \in \mathcal{L} \left( \mathrm{L}^{1}\left( \Omega \right) \right)$}

Remember that the main result of this article is to present conditions of the boundary operator 
$B \in \mathrm{L}^{2}\left( \partial \Omega \right)$
such that the solution operator semigroup $\mathrm{e}^{-tL(B)}$ is ultracontractive in $\mathrm{L}^{2}\left( \Omega \right)$, i.e.
$$
\exists \ C, \mu > 0 \ \forall u \in  \mathrm{L}^{2}\left( \Omega \right) \ : \ 
\left\| \mathrm{e}^{-tL(B)} u \right\|_{ \mathrm{L}^{\infty}\left( \Omega \right)}
\leq C t^{-\frac{\mu}{4}} \left\| u \right\|_{ \mathrm{L}^{2}\left( \Omega \right)} 
$$
is true for any time $t \in (0,1]$. We use Nash's inequality (\ref{Nash_inequality}) for our argumentation. 
We will see that it is crucial 
to show that $\mathrm{e}^{-tL(B)^{\ast}}$ are continuous operators in $\mathrm{L}^{1}\left( \Omega \right)$. 
The next proposition states that $\mathrm{e}^{-tL(B)}$ being continuous operators in 
$\mathrm{L}^{\infty}\left( \Omega \right)$ is sufficient.\\

\begin{prp}\label{L1_Linfty_semigroup}
Let $\mathrm{e}^{-tL(B)}$ be a continuous operator in $\mathrm{L}^{\infty}\left( \Omega \right)$ at any time $t \geq 0$. 
Then  $\mathrm{e}^{-tL(B)^{\ast}}$ are continuous operators in $\mathrm{L}^{1}\left( \Omega \right)$ such that
$$
\left\| \mathrm{e}^{-tL(B)^{\ast}} \right\|_{\mathrm{L}^{1} \to \mathrm{L}^{1}} \ \leq \ 
\left\| \mathrm{e}^{-tL(B)} \right\|_{ \mathrm{L}^{\infty} \to \mathrm{L}^{\infty} } 
$$
is true for every time $t \geq 0$.
\end{prp}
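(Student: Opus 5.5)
We prove the estimate by duality between $\mathrm{L}^{1}(\Omega)$ and $\mathrm{L}^{\infty}(\Omega)$. Since $\Omega$ is bounded, $\mathrm{L}^{\infty}(\Omega) \subset \mathrm{L}^{2}(\Omega) \subset \mathrm{L}^{1}(\Omega)$ with continuous inclusions. Also, $\mathrm{L}^{2}(\Omega)$ is dense in $\mathrm{L}^{1}(\Omega)$. Write $T = \mathrm{e}^{-tL(B)}$ for a fixed $t \geq 0$, so that $T^{\ast} = \mathrm{e}^{-tL(B)^{\ast}}$ is the Hilbert space adjoint in $\mathrm{L}^{2}(\Omega)$ by part ii.) of the preceding remark. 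The plan is first to prove the bound
$$
\| T^{\ast} f \|_{\mathrm{L}^{1}(\Omega)} \ \leq \ \| T \|_{\mathrm{L}^{\infty} \to \mathrm{L}^{\infty}} \| f \|_{\mathrm{L}^{1}(\Omega)}
$$
for every $f \in \mathrm{L}^{2}(\Omega)$. The operator $T^{\ast}$ then extends uniquely by density to a bounded operator on $\mathrm{L}^{1}(\Omega)$ with the claimed norm bound.

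For the key step, fix $f \in \mathrm{L}^{2}(\Omega)$. We use the dual characterisation
$$
\| g \|_{\mathrm{L}^{1}(\Omega)} = \sup \left\{ \left| \int_{\Omega} g \, \overline{\varphi} \d x \right| \ : \ \varphi \in \mathrm{L}^{\infty}(\Omega), \ \| \varphi \|_{\mathrm{L}^{\infty}(\Omega)} \leq 1 \right\},
$$
which holds for $g \in \mathrm{L}^{1}(\Omega)$. To avoid the supremum, one can simply choose $\varphi = \overline{\operatorname{sgn}(g)}$ conjugated appropriately, i.e. $\varphi = g/|g|$ on $\{ g \neq 0 \}$ and $\varphi = 0$ elsewhere. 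We apply this to $g = T^{\ast} f \in \mathrm{L}^{2}(\Omega) \subset \mathrm{L}^{1}(\Omega)$. Every such $\varphi$ lies in $\mathrm{L}^{2}(\Omega)$, so the $\mathrm{L}^{2}$-adjoint relation gives
$$
\int_{\Omega} T^{\ast} f \, \overline{\varphi} \d x = \langle T^{\ast} f, \varphi \rangle_{\mathrm{L}^{2}(\Omega)} = \langle f, T \varphi \rangle_{\mathrm{L}^{2}(\Omega)} = \int_{\Omega} f \, \overline{T \varphi} \d x .
$$
By hypothesis $T\varphi \in \mathrm{L}^{\infty}(\Omega)$ with $\| T\varphi \|_{\mathrm{L}^{\infty}} \leq \| T \|_{\mathrm{L}^{\infty} \to \mathrm{L}^{\infty}} \| \varphi \|_{\mathrm{L}^{\infty}}$. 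Hölder's inequality therefore bounds the right-hand side by $\| f \|_{\mathrm{L}^{1}} \| T \|_{\mathrm{L}^{\infty} \to \mathrm{L}^{\infty}}$. Taking the supremum over $\varphi$ yields the desired inequality for all $f \in \mathrm{L}^{2}(\Omega)$.

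Finally we interpret the conclusion. The hypothesis that $T$ is a continuous operator in $\mathrm{L}^{\infty}(\Omega)$ is read as saying that the restriction of the $\mathrm{L}^{2}$-operator $T$ to $\mathrm{L}^{\infty}(\Omega)$ maps into $\mathrm{L}^{\infty}(\Omega)$ boundedly; this is exactly what the key step uses. For the conclusion, $\mathrm{e}^{-tL(B)^{\ast}}$ being continuous on $\mathrm{L}^{1}(\Omega)$ means that the $\mathrm{L}^{2}$-operator $T^{\ast}$ admits a (unique) bounded extension to $\mathrm{L}^{1}(\Omega)$, consistent on $\mathrm{L}^{1} \cap \mathrm{L}^{2} = \mathrm{L}^{2}$. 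This extension exists by the density argument above, via the same reasoning as in part ii) of the first remark. The only point requiring care, and the mild obstacle, is this identification of the operators across the different $\mathrm{L}^{p}$ spaces. The analytic content is just the one-line duality computation.
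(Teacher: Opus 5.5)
Your proposal is correct and follows the paper's proof essentially step for step. Both arguments use a norming element of $\mathrm{L}^{\infty}(\Omega)\subset\mathrm{L}^{2}(\Omega)$, the $\mathrm{L}^{2}$-adjoint relation, the $\mathrm{L}^{1}$–$\mathrm{L}^{\infty}$ Hölder bound, and then extend by density of $\mathrm{L}^{2}(\Omega)$ in $\mathrm{L}^{1}(\Omega)$. The only (cosmetic) difference is that you write the norming function $\varphi = g/|g|$ down explicitly, whereas the paper obtains it from the Hahn--Banach theorem.
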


\begin{proof}
Let $u \in \mathrm{L}^{2}\left( \Omega \right)$ and note that 
$\mathrm{L}^{2}\left( \Omega \right) \subset \mathrm{L}^{1}\left( \Omega \right)$ is true since 
 $\Omega$ is a bounded Lipschitz domain in $\R^{d}$.
Then $\mathrm{e}^{-tL(B)^{\ast}}u \in \mathrm{L}^{2}\left( \Omega \right)$ is contained
in $\mathrm{L}^{1}\left( \Omega \right)$. Using the Hahn-Banach extension 
theorem there exists a 
$v \in \mathrm{L}^{\infty}( \Omega ) = \left( \mathrm{L}^{1}( \Omega ) \right)^{\ast}$ with 
$\| v \|_{\mathrm{L}^{\infty}(\Omega)} = 1$ such that
$$
\langle \mathrm{e}^{-tL(B)^{\ast}}u, v \rangle \ = \ \left\| \mathrm{e}^{-tL(B)^{\ast}}u \right\|_{\mathrm{L}^{1}(\Omega)}
$$
is true. Note that $v \in \mathrm{L}^{2} \left( \Omega \right)$ is implied by 
$\mathrm{L}^{\infty} \left( \Omega \right) \subset \mathrm{L}^{2} \left( \Omega \right)$ since $\Omega$ is bounded.
Hence we conclude to
\begin{align*}
\left\| \mathrm{e}^{-tL(B)^{\ast}}u \right\|_{\mathrm{L}^{1}(\Omega)} & = 
\left|  \langle \mathrm{e}^{-tL(B)^{\ast}}u, v \rangle \right| = \left|  \langle u, \mathrm{e}^{-tL(B)}v \rangle \right| \\
\ \\
& \leq \left\| \mathrm{e}^{-tL(B)}v \right\|_{\mathrm{L}^{\infty}(\Omega)} \ \| u \|_{\mathrm{L}^{1}(\Omega)}
\leq \left\| \mathrm{e}^{-tL(B)} \right\|_{ \mathrm{L}^{\infty} \to \mathrm{L}^{\infty} } \
\left\| v \right\|_{\mathrm{L}^{\infty}(\Omega)} \ \| u \|_{\mathrm{L}^{1}(\Omega)} \\
\ \\
& = \left\| \mathrm{e}^{-tL(B)} \right\|_{ \mathrm{L}^{\infty} \to \mathrm{L}^{\infty} } \ \| u \|_{\mathrm{L}^{1}(\Omega)} 
\end{align*}
is true where we used that $\mathrm{e}^{-tL_{B}}$ is continuous in $\mathrm{L}^{\infty}\left( \Omega \right)$
and $ \mathrm{e}^{-tL(B)^{\ast}}$ is the adjoint operator of $\mathrm{e}^{-tL_{B}}$ in $\mathrm{L}^{2}\left( \Omega \right)$.
Therefore $\mathrm{e}^{-tL(B)^{\ast}}$ is continuously extendable to a bounded operator in 
$\mathrm{L}^{1}(\Omega)$ with
$\left\| \mathrm{e}^{-tL(B)^{\ast}} \right\|_{\mathrm{L}^{1} \to \mathrm{L}^{1}} \ \leq \ 
\left\| \mathrm{e}^{-tL(B)} \right\|_{ \mathrm{L}^{\infty} \to \mathrm{L}^{\infty} }$
since $\mathrm{L}^{2}(\Omega)$ is dense in $\mathrm{L}^{1}(\Omega)$.\\
\end{proof}

In the following we therefore focus on finding conditions on $B$ such that the solution semigroup operators
$\mathrm{e}^{-tL_{B}}$ are continuous in $\mathrm{L}^{\infty}\left( \Omega \right)$ at any time $t \geq 0$.

\subsubsection{Domination of the auxiliary semigroup $\mathrm{e}^{-t\tilde{L}_{B}}$}

Let the following properties be true:
\begin{enumerate}[i)]
\item
Let $B_{pos} \in \mathcal{L} \left( \mathrm{L}^{2}( \partial \Omega ) \right)$ a positive operator with
$
\left| B u \right| \ \leq \ B_{pos} \left| u \right|
$
for $u \in \mathrm{L}^{2}( \partial \Omega )$. 
\item Let $B_{pos}$ be a continuous operator in $\mathrm{L}^{\infty}( \partial \Omega )$. 
\item Let $h \in \mathrm{L}^{2}( \partial \Omega )$ be a function with $\frac{1}{2} \ \leq \ h \ \leq \ 2$
almost everywhere in $\partial \Omega$. \\
\end{enumerate}
\ \\
Notice that the positivity of $B_{pos}$ and strict positivity and boundedness of $h$ implies
\begin{equation} \label{B_{pos}_inequ}
0 \leq \frac{1}{2} B_{pos} \mathbb{1} \leq B_{pos}h \leq \| B_{pos} \|_{\mathrm{L}^{\infty} \to \mathrm{L}^{\infty}} 2 =
4 \| B_{pos} \|_{\mathrm{L}^{\infty} \to \mathrm{L}^{\infty}} \frac{1}{2} \leq 
4 \| B_{pos} \|_{\mathrm{L}^{\infty} \to \mathrm{L}^{\infty}} h
\end{equation}
pointwise almost everywhere in $\partial \Omega$. We state the following definition.

\begin{dfn}
We define $\beta = 4 \| B_{pos} \|_{\mathrm{L}^{\infty} \to \mathrm{L}^{\infty}}$ and
$$
\overline{B} u \ = \ - B_{pos}u - \left( \beta h - B_{pos} h \right)
$$
for every $u \in \mathrm{L}^{2}( \partial \Omega )$.
\end{dfn}

\begin{rem} \
\begin{enumerate}[i)]
\item The general idea is to introduce an auxiliary boundary operator $\overline{B}$ to prove the inequality
$$
\left| \mathrm{e}^{-tL(B)}u \right| \ \leq \ \mathrm{e}^{-tL(\overline{B})} \left| u \right|
$$
pointwise almost everywhere in $\Omega$ for every $u \in \mathrm{L}^{2}( \Omega )$. Then we focus on the auxiliary 
semigroup $\mathrm{e}^{-tL(\overline{B})}$ and a continuity in $\mathrm{L}^{\infty}( \Omega )$.
\item The term $\beta h - B_{pos} h \in  \mathrm{L}^{2}( \partial \Omega )$ in $\overline{B}$ is included for exactly that
purpose.
\item Note that $\beta h - B_{pos} h \geq 0$ is true almost everywhere in $\partial \Omega$ by (\ref{B_{pos}_inequ}).
\end{enumerate}
\end{rem}

\begin{lem}\label{inequ_semigroup_B_overlineB}
The operator semigroup $\mathrm{e}^{-t\tilde{L}(B)}$ to $B$ is dominated by the operator semigroup
 $\mathrm{e}^{-t\tilde{L}(\overline{B})}$ to $\overline{B}$, i.e.
$$
\left| \mathrm{e}^{-t\tilde{L}(B)}u \right| \ \leq \ \mathrm{e}^{-t\tilde{L}(\overline{B})} \left| u \right|
$$ 
holds pointwise almost everywhere in $\Omega$ for every $u \in   \mathrm{L}^{2}( \Omega )$ where $\lambda_{0}$ 
in the auxiliary forms $\tilde{a}(B)$ and $\tilde{a}(\overline{B})$ is chosen sufficiently large that both forms are accretive. 
Furthermore 
$$
\left| \mathrm{e}^{-tL(B)}u \right| \ \leq \ \mathrm{e}^{-tL(\overline{B})} \left| u \right|
$$
is a direct implication.
\end{lem}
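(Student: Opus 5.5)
The plan is to apply Ouhabaz's domination criterion for semigroups associated with sesquilinear forms (the version collected in Appendix \ref{results_by_Ouhabaz}) to the pair of accretive, continuous and closed forms $\tilde{a}(B)$ and $\tilde{a}(\overline{B})$. Both forms are covered by Lemma \ref{form_properties}, once $\lambda_{0}$ is enlarged so that Corollary \ref{Operator_B} works for $\|B\|$ and for $\|\overline{B}\|$. Throughout I read $\overline{B}$ as the bounded linear operator
$$
\overline{B}u = -B_{pos}u - m u, \qquad m := \beta h - B_{pos}h .
$$
Here $m$ acts as a multiplication operator. By (\ref{B_{pos}_inequ}) we have $0 \leq m \leq \beta h \leq 2\beta$, so $m \in \mathrm{L}^{\infty}(\partial\Omega)$ and $\overline{B} \in \mathcal{L}(\mathrm{L}^{2}(\partial\Omega))$.

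The criterion needs three ingredients, which I would verify in order.

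\emph{First}, the semigroup $\mathrm{e}^{-t\tilde{L}(\overline{B})}$ is positive. By the Beurling--Deny type criterion this follows if $u \in H^{1}(\Omega)$ implies $(\Re u)^{+} \in H^{1}(\Omega)$ and $\tilde{a}(\overline{B})(u^{+},u^{-}) \leq 0$ for real $u$. The lattice property of $H^{1}(\Omega)$ is standard, and $\gamma(u^{\pm}) = \gamma(u)^{\pm}$ holds. The gradient term and the $\lambda_{0}$ term vanish because $u^{+}$ and $u^{-}$ have disjoint supports. What remains is
$$
-\int_{\partial\Omega} B_{pos}\gamma(u)^{+}\,\gamma(u)^{-} \d\sigma - \int_{\partial\Omega} m\,\gamma(u)^{+}\gamma(u)^{-} \d\sigma ,
$$
which is $\leq 0$ by positivity of $B_{pos}$ and $m \geq 0$.

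\emph{Second}, $D(\tilde{a}(B)) = H^{1}(\Omega)$ is an ideal of $D(\tilde{a}(\overline{B})) = H^{1}(\Omega)$. Two facts are needed. If $u \in H^{1}(\Omega)$, then $|u| \in H^{1}(\Omega)$. If $v \in H^{1}(\Omega)$ with $|v| \leq |u|$, then $v\,\mathrm{sign}\,u \in H^{1}(\Omega)$. Both are standard lattice facts for $H^{1}$ (Ouhabaz, Chapter 4).

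\emph{Third}, the key inequality: for $u \in H^{1}(\Omega)$ and $0 \leq v \in H^{1}(\Omega)$ with $v \leq |u|$,
$$
\Re\,\tilde{a}(B)(u, v\,\mathrm{sign}\,u) \geq \tilde{a}(\overline{B})(|u|, v).
$$
The gradient parts satisfy this with equality, or with $\geq$ by the usual computation $\nabla u \cdot \overline{\nabla(v\,\mathrm{sign}\,u)} = \nabla|u|\cdot\nabla v$ plus a nonnegative term on $\{u\neq 0\}$, exactly as in Ouhabaz's treatment of Robin forms with real $A$. The $\lambda_{0}$ terms coincide. On the boundary, using $\gamma(v\,\mathrm{sign}\,u) = \gamma(v)\,\mathrm{sign}\,\gamma(u)$ and $|B\gamma u| \leq B_{pos}|\gamma u|$, I get
$$
\Re\int_{\partial\Omega} B\gamma(u)\,\overline{\mathrm{sign}\,\gamma(u)}\,\gamma(v) \d\sigma \geq -\int_{\partial\Omega} B_{pos}|\gamma(u)|\,\gamma(v) \d\sigma \geq \int_{\partial\Omega} \overline{B}\,|\gamma(u)|\,\gamma(v) \d\sigma .
$$
The last step uses $m\,|\gamma u|\,\gamma v \geq 0$. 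Together with the first two ingredients, the criterion then gives $|\mathrm{e}^{-t\tilde{L}(B)}u| \leq \mathrm{e}^{-t\tilde{L}(\overline{B})}|u|$.

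Finally, the same $\lambda_{0}$ is used in both auxiliary forms, and Lemma \ref{Semigroup_adjustment} ii.) gives $\mathrm{e}^{-tL(\cdot)} = \mathrm{e}^{t\lambda_{0}}\mathrm{e}^{-t\tilde{L}(\cdot)}$. Multiplying the domination inequality by $\mathrm{e}^{t\lambda_{0}} > 0$ yields the second claim.

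The main obstacle is the third step, and specifically the gradient term. For a general real, non-symmetric bounded measurable $A$, one must check that $\Re\, A\nabla u\cdot\overline{\nabla(v\,\mathrm{sign}\,u)} \geq A\nabla|u|\cdot\nabla v$ holds pointwise. I would first try the standard decomposition $u = |u|\,\mathrm{sign}\,u$ on $\{u \neq 0\}$, as in the Ouhabaz proof for real coefficient matrices. If $A$ were genuinely complex, this step could fail and an additional assumption would be needed.
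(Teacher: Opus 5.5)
Your proof is correct and uses the same framework as the paper: Ouhabaz's Theorem~2.21, positivity of $\mathrm{e}^{-t\tilde{L}(\overline{B})}$ via Beurling--Deny, and the boundary estimate from $|Bu|\le B_{pos}|u|$. It differs from the paper in two places: how the gradient term is handled, and what $\overline{B}$ is taken to be.

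\textbf{Gradient term.} The paper never computes the gradient part. It notes that $\mathrm{e}^{-t\tilde{L}(0)}$ is positive, hence dominates itself. From the implication ``domination $\Rightarrow$ form inequality'' in Theorem~2.21 it then reads off $\tilde{a}(0)(|u|,|v|)\le\Re\,\tilde{a}(0)(u,v)$ for $u\overline{v}\ge 0$, and adds the boundary estimate. You verify the inequality by hand. The step you flag as the obstacle does go through for non-symmetric real $A$: with $s=\mathrm{sign}\,u$ and $w=\Im(\overline{s}\,\nabla u)$ on $\{u\neq 0\}$,
\[
\Re\bigl(A\nabla u\cdot\overline{\nabla(vs)}\bigr)=A\nabla|u|\cdot\nabla v+\frac{v}{|u|}\,Aw\cdot w\ \ge\ A\nabla|u|\cdot\nabla v ,
\]
and both sides vanish a.e.\ on $\{u=0\}$. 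The paper's trick is shorter. Yours is self-contained and makes explicit that only realness and ellipticity of $A$ are used; the paper needs realness too, for positivity of $\mathrm{e}^{-t\tilde{L}(0)}$.

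\textbf{The operator $\overline{B}$.} The paper's definition is literally affine, $u\mapsto -B_{pos}u-(\beta h-B_{pos}h)$. Its proof computes with it in that form, e.g.\ the term $-\int_{\partial\Omega}(\beta h-B_{pos}h)(\Re u)^{-}\d\sigma$, which is not legitimate for sesquilinear forms. Your linear reading is therefore a needed repair. Likewise, checking Beurling--Deny on all of $H^{1}(\Omega)$ is cleaner than the paper's appeal to $C^{1}(\overline{\Omega})$ as a core.

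However, your multiplier $m=\beta h-B_{pos}h$ does not give $\overline{B}h=-\beta h$, which Lemma~\ref{definition_h} requires. The consistent choice is $m=\beta-(B_{pos}h)/h\in[0,\beta]$. Your argument only uses $0\le m\in\mathrm{L}^{\infty}(\partial\Omega)$, so it applies verbatim to that choice.
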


\begin{proof}
We use Theorem 2.21 on page 60 in \cite{Ouhabaz05} for our argumentation. Therefore we have to use the accretive forms
$\tilde{a}(B)$ and $\tilde{a}(\overline{B})$. We chose $\lambda_{0}$ sufficiently large that both forms are accretive.
\begin{enumerate}[i)]
\item First we have to show that $\mathrm{e}^{-t\tilde{L}(\overline{B})}$ are all positive operators. Let 
$u \in C^{1}( \overline{\Omega} )$.  Then $\partial_{k}(\Re u)^{+} \partial_{l}  (\Re u)^{-} = 0$ is true almost everywhere
in $\Omega$ due to disjoint supports where we remember that $(\Re u)^{+},  (\Re u)^{-} \in H^{1}(\Omega)$.
So
$$
\langle A\nabla (\Re u)^{+}, \nabla (\Re u)^{-} \rangle_{\mathrm{L}^{2}\left( \Omega \right)^{d}} = 0 
$$
is implied. Hence we infer
\begin{align*} 
& \tilde{a}(\overline{B}) \left(  (\Re u)^{+},  (\Re u)^{-} \right) = \int_{\partial \Omega} \overline{B} (\Re u)^{+} \ (\Re u)^{-} \d \sigma(x)
& \\
& = - \int_{\partial \Omega} \underbrace{B_{pos} (\Re u)^{+}}_{\geq 0} \ \underbrace{(\Re u)^{-}}_{\geq 0} \d \sigma(x) 
- \int_{\partial \Omega} \underbrace{\left( \beta h - B_{pos} h \right)}_{\geq 0} \ \underbrace{(\Re u)^{-}}_{\geq 0} \d \sigma(x) 
 \leq 0. &
\end{align*}
Using Theorem 2.6 on page 50 in \cite{Ouhabaz05} we conclude that $\mathrm{e}^{-t\tilde{L}(\overline{B})}$ are positive operators
since $C^{1}( \overline{\Omega} )$ is a core of $\tilde{a}(\overline{B})$. 
\item First note that $H^{1}(\Omega)$ is an ideal in itself by Proposition 2.20 on page 59 in \cite{Ouhabaz05} since
$\mathrm{e}^{-t\tilde{L}(\overline{B})}$ are positive. Now let $u,v \in H^{1}(\Omega)$ be such that $u\overline{v} \geq 0$. 
In the following we argue that
$$
\tilde{a}(\overline{B})(|u|, |v|) \ \leq \ \Re \tilde{a}(B)(u, v)
$$
is implied. First we conclude that
\begin{align*}
- \Re \langle B\gamma(u), \gamma(v) \rangle_{\mathrm{L}^{2}( \partial \Omega )} & 
\leq \left| \langle B\gamma(u), \gamma(v) \rangle_{\mathrm{L}^{2}( \partial \Omega )} \right| 
\leq \langle \left| B\gamma(u) \right|, |\gamma(v)| \rangle_{\mathrm{L}^{2}( \partial \Omega )} & \\
\ \\
& \leq \langle B_{pos} \left| \gamma(u) \right|, |\gamma(v)| \rangle_{\mathrm{L}^{2}( \partial \Omega )} & \\
\ \\
& \leq \langle B_{pos} \left| \gamma(u) \right| + \left( \beta h - B_{pos} h \right), |\gamma(v)| \rangle_{\mathrm{L}^{2}( \partial \Omega )}
& \\
\ \\
& =  \langle - \overline{B} \left| \gamma(u) \right|, \left| v \right|  \rangle_{\mathrm{L}^{2}( \partial \Omega )} & 
\end{align*}
is true. Hence
$\langle \overline{B} \left| \gamma(u) \right|, \left| v \right|  \rangle_{\mathrm{L}^{2}( \partial \Omega )} 
\leq \Re \langle B\gamma(u), \gamma(v) \rangle_{\mathrm{L}^{2}( \partial \Omega )}$
is implied. \\
We need similar results for the remaining part of the forms. Note that $\mathrm{e}^{-t\tilde{L}(0)}$ are positive operators. Hence
$$
\left| \mathrm{e}^{-t\tilde{L}(0)}w \right| \leq \mathrm{e}^{-t\tilde{L}(0)} \left| w \right|
$$
is true for every $w \in \mathrm{L}^{2}(\Omega)$. By Theorem 2.21 on page 60 in \cite{Ouhabaz05} 
$$
\tilde{a}(0)(|u|, |v|) \ \leq \ \Re \tilde{a}(0)(u, v)
$$
is implied. Hence we conclude to 
$$
\tilde{a}(\overline{B})(|u|, |v|) \ \leq \ \Re \tilde{a}(B)(u, v)
$$
which proves the claim by using Theorem 2.21 in \cite{Ouhabaz05}  once again.
\end{enumerate}
\end{proof}

\subsubsection{Proving $\mathrm{e}^{-tL(B)} \in \mathcal{L} \left( \mathrm{L}^{\infty}\left( \Omega \right) \right)$}

Again let the following properties be true:
\begin{enumerate}[i)]
\item
Let $B_{pos} \in \mathcal{L} \left( \mathrm{L}^{2}( \partial \Omega ) \right)$ a positive operator with
$
\left| B u \right| \ \leq \ B_{pos} \left| u \right|
$
for $u \in \mathrm{L}^{2}( \partial \Omega )$. 
\item Let $B_{pos}$ be a continuous operator in $\mathrm{L}^{\infty}( \partial \Omega )$. 
\end{enumerate}
\ \\
Let us prove the existence of such an auxiliary function $h$ in the previous section that implies 
$$
\left| \mathrm{e}^{-tL(B)}u \right| \ \leq \ \mathrm{e}^{-tL(\overline{B})} \left| u \right|
$$
almost everywhere in $\Omega$ for any $u \in \mathrm{L}^{2}( \Omega )$. Remember that 
$\beta = 4 \| B_{pos} \|_{\mathrm{L}^{\infty} \to \mathrm{L}^{\infty}}$. We define the form $a(-\beta)$ by
$$
a(-\beta)(u,v) = \int_{\Omega} A \nabla u \cdot \overline{\nabla v} \d x - 
\int_{\partial \Omega} \beta \gamma(u) \ \overline{\gamma(v)} \d \sigma(x)
$$
for $u,v \in D\left( a(-\beta) \right) = H^{1}(\Omega)$ where $\beta \gamma(u)$ is the product with
the constant $\beta$. 

\begin{lem}\label{definition_h}
There exists a function $h \in D \left( L( \overline{B}) \right) \cap D \left( L(-\beta) \right)$ with 
\begin{enumerate}[i)]
\item $\frac{1}{2} \ \leq \ h \ \leq \ 2$ almost everywhere in $\Omega$ and
\item $L(\overline{B})u = L(-\beta)u$ in $\mathrm{L}^{2}( \Omega )$.
\end{enumerate}
\end{lem}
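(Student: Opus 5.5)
The key observation is that $\overline{B}$ was built precisely so that it acts like multiplication by $-\beta$ on the trace of $h$. I read statement ii) as $L(\overline{B})h = L(-\beta)h$, and the trace of $h$ is what enters $\overline{B}$. For $u=\gamma(h)$ the definition gives
$$
\overline{B}\gamma(h) = -B_{pos}\gamma(h) - \beta\gamma(h) + B_{pos}\gamma(h) = -\beta\gamma(h).
$$
Hence $a(\overline{B})(h,\varphi) = a(-\beta)(h,\varphi)$ for every $\varphi\in H^{1}(\Omega)$.

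So the plan is to construct $h$ in two stages. First, produce $h\in D(L(-\beta))$ with $\frac12\le h\le 2$ a.e. in $\Omega$. Second, transfer this to $L(\overline{B})$ by the form identity. For the transfer: if $h\in D(L(-\beta))$, then $v:=L(-\beta)h\in \mathrm{L}^{2}(\Omega)$ satisfies $a(\overline{B})(h,\varphi)=a(-\beta)(h,\varphi)=\langle v,\varphi\rangle$ for all $\varphi$. By the definition of the associated operator (via $\tilde a$ and Lemma \ref{Semigroup_adjustment} i.)), this gives $h\in D(L(\overline{B}))$ with $L(\overline{B})h=v=L(-\beta)h$.

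For the existence of $h$, I will take $h=\lambda(\lambda+L(-\beta))^{-1}\mathbb{1}$ for $\lambda$ large. Since $-\beta$ is a bounded (multiplication) operator on $\mathrm{L}^{2}(\partial\Omega)$, Lemma \ref{Semigroup_adjustment} applies with $B=-\beta$. Then $L(-\beta)$ is well defined and $\lambda+L(-\beta)$ is invertible for $\lambda>\lambda_{0}$, so $h\in D(L(-\beta))$. Writing $h=\mathbb{1}+w$, the function $w$ is the weak solution of
$$
-\operatorname{div}(A\nabla w)+\lambda w=0 \text{ in } \Omega,\qquad \nu\cdot A\nabla w-\beta\gamma(w)=\beta \text{ on }\partial\Omega.
$$
Equivalently, $\tilde a_\lambda(w,\varphi)=\beta\int_{\partial\Omega}\overline{\gamma(\varphi)}\d\sigma$ for all $\varphi\in H^{1}(\Omega)$. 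It therefore suffices to show $\|w\|_{\mathrm{L}^{\infty}(\Omega)}\le\frac12$ for $\lambda$ large, since this gives $\frac12\le h\le\frac32\le 2$ and $h$ real.

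The main obstacle is this uniform $\mathrm{L}^{\infty}$ bound, with a constant that tends to $0$ as $\lambda\to\infty$. I would first try a Stampacchia/De Giorgi truncation argument. Test with $(w-k)^{+}$ for $k\ge0$ (and symmetrically $(w+k)^{-}$), which is admissible in $H^{1}(\Omega)$. Absorb the boundary terms $\beta\int_{\partial\Omega}\gamma(w)\gamma((w-k)^{+})$ using Lemma \ref{gamma} with small $\varepsilon$. The boundary datum $\beta$ is then controlled by $\beta\,\sigma(\{\gamma(w)>k\})^{1/2}\|\gamma((w-k)^+)\|_{\mathrm{L}^{2}(\partial\Omega)}$, and the boundary Sobolev embedding of $H^{1}(\Omega)$ into $\mathrm{L}^{2(d-1)/(d-2)}(\partial\Omega)$ closes the iteration, which yields $\|w\|_\infty\le C(\lambda)$. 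To see that $C(\lambda)\to0$, track how $\lambda$ enters the coercivity constant $\min(\alpha/2,\lambda-\lambda_0)$ in the De Giorgi iteration. Alternatively, combine an $H^{1}$ bound $\|w\|_{H^{1}}\lesssim\lambda^{-1/4}$ (from testing with $w$ and Lemma \ref{gamma}) with the De Giorgi $\mathrm{L}^{2}\to\mathrm{L}^{\infty}$ estimate.

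If this direct route is too delicate, the fallback is positivity and comparison. The semigroup $\mathrm{e}^{-tL(-\beta)}$ is positive (as in part i) of the proof of Lemma \ref{inequ_semigroup_B_overlineB}, the boundary term $-\beta\int\gamma(u^+)\gamma(u^-)\d\sigma=0$). Then one can bound $w$ above and below by explicit barriers built from the $\mathrm{L}^{\infty}$-bounded Robin resolvents known from the literature, namely Daners' estimates as used in \cite{Glueck_Mui_2026}.
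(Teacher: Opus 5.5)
\textbf{There is a genuine gap, in the one non-trivial step.} Your architecture matches the paper's: the same $h=\lambda(\lambda+L(-\beta))^{-1}\mathbb{1}$, and the same transfer via $\overline{B}\gamma(h)=-\beta\gamma(h)$, which is step iv) of the paper's proof. What is missing is a proof that $\|w\|_{\mathrm{L}^{\infty}(\Omega)}\le\frac12$ for large $\lambda$, i.e.\ that $\lambda(\lambda+L(-\beta))^{-1}\mathbb{1}\to\mathbb{1}$ uniformly. None of the mechanisms you indicate produces decay in $\lambda$:
\begin{itemize}
\item The coercivity constant $\min(\alpha/2,\lambda-\lambda_0)$ equals $\alpha/2$ once $\lambda$ is large. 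Tracking it through a De Giorgi iteration therefore only gives a $\lambda$-independent bound $C(\lambda)\le C$.
\item A De Giorgi $\mathrm{L}^2\to\mathrm{L}^{\infty}$ estimate for your problem has the shape $\|w\|_{\infty}\le C\big(\|w\|_{\mathrm{L}^2(\Omega)}+\|\beta\|_{\mathrm{L}^{\infty}(\partial\Omega)}\big)$. The Robin datum $\beta$ is the entire source of $w$ and stays fixed, so $\|w\|_{H^1}\to0$ does not yield $\|w\|_{\infty}\to0$.
\item In the truncation step, $\gamma(w)=\gamma((w-k)^+)+k$ on the truncation set. So $k\beta\int_{\partial\Omega}\gamma((w-k)^+)\d\sigma$ cannot be absorbed by Lemma \ref{gamma}; it is a second source term.
\item The fallback is not an argument. $\mathrm{L}^{\infty}$-bounds on the Robin resolvents only give $\|h\|_{\infty}\le M$, not closeness of $h$ to $\mathbb{1}$. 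For merely bounded measurable $A$ there are no explicit barriers.
\end{itemize}

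\textbf{How the paper closes it, and how you could.} The paper cites Nittka's theorem that $\mathrm{e}^{-tL(-\beta)}$ restricts to a $C_0$-semigroup on $C(\overline{\Omega})$. Then $\mu(\mu+L(-\beta))^{-1}\mathbb{1}\to\mathbb{1}$ in sup-norm is just the general resolvent convergence for $C_0$-semigroups. As a bonus, $h\in C(\overline{\Omega})$, so $\frac12\le h\le2$ also holds on $\partial\Omega$, which is what the definition of $\overline{B}$ actually uses. If you want an elementary route instead, the missing idea is to get smallness from the \emph{measure of boundary superlevel sets}, not from constants:
\begin{enumerate}
\item Testing with $w^-$ gives $w\ge0$.
\item Testing with $w$ and applying Lemma \ref{gamma} twice (once to absorb, once with arbitrarily small $\varepsilon$) gives $\|\gamma(w)\|_{\mathrm{L}^2(\partial\Omega)}\to0$ as $\lambda\to\infty$.
\item Test with $(w-k)^+$ for $0\le k\le1$, absorb $\beta\|\gamma((w-k)^+)\|^2$, and use the trace embedding $H^1(\Omega)\hookrightarrow\mathrm{L}^p(\partial\Omega)$ with $p=\frac{2(d-1)}{d-2}$ (for $d\ge3$). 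This gives $\|\gamma((w-k)^+)\|_{\mathrm{L}^p(\partial\Omega)}\le C\,\sigma(\{\gamma(w)>k\})^{1-1/p}$ with $C$ independent of $\lambda$.
\item Stampacchia's lemma then gives $\operatorname{ess\,sup}\gamma(w)\le k_0+C'\sigma(\{\gamma(w)>k_0\})^{(p-2)/p}$. By Chebyshev and step 2, this is at most $\frac12$ for $k_0=\frac14$ and $\lambda$ large.
\item Finally, testing with $(w-\frac12)^+$, whose trace vanishes, yields $w\le\frac12$ in $\Omega$.
\end{enumerate}
This would be a more self-contained proof than the paper's, but your proposal as written does not contain it.
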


\begin{proof}
\begin{enumerate}[i)]
\item We use Theorem 4.3 on page 13 in \cite{Nittka2011} and conclude that the restriction of 
$\mathrm{e}^{-tL(-\beta)}$ to $C(\overline{\Omega})$ is a strongly-continuous operator semigroup
in $C(\overline{\Omega})$. Furthermore $-L(-\beta)$ restricted to $C(\overline{\Omega})$ is its generator.
\item Using Lemma 2.10 on page 18 in \cite{ACSVV15} we conclude that
$$
\left\| \mu ( \mu + L(-\beta) )^{-1} \mathbb{1} - \mathbb{1} \right\|_{\infty} \ \to \ 0 
$$
holds for $\mu \to \infty$ where $\mathbb{1} \in C(\overline{\Omega})$ is a constant function mapping 
$\overline{\Omega}$ to $1$.
\item Choose $\mu_{0} > 0$ sufficiently large such that 
$$
h = \mu_{0} ( \mu_{0} + L(-\beta) )^{-1} \mathbb{1} \ \in \ C(\overline{\Omega}) \cap D \left( L(-\beta) \right)
$$
satsifies $\frac{1}{2} \ \leq \ h \ \leq \ 2$ everywhere in $\overline{\Omega}$.
\item For every $v \in H^{1}(\Omega)$ we argue that
\begin{align*}
\langle L(-\beta)h, v \rangle_{\mathrm{L}^{2}(\Omega)} & =  a(-\beta)(h,v)   & \\
& = \int_{\Omega} A \nabla h \cdot \overline{\nabla v} \d x - 
\int_{\partial \Omega} \beta h \ \overline{\gamma(v)} \d \sigma(x) & \\
&  = \int_{\Omega} A \nabla h \cdot \overline{\nabla v} \d x + 
\int_{\partial \Omega} \overline{B} h \ \overline{\gamma(v)} \d \sigma(x) & \\
& = a( \overline{B} )(h,v) & 
\end{align*}
is true. Hence $h \in D \left( L( \overline{B}) \right)$ with $L( \overline{B})h = L( -\beta)h$.
\end{enumerate}
\end{proof}

\begin{lem}\label{Linfty_overlineB}
Let $h$ and $\mu_{0} \geq 0$ be as described in Lemma \ref{definition_h}. For any time $t \geq 0$
the auxiliary semigroup operators $\mathrm{e}^{-tL(\overline{B})}$ are continuous in $\mathrm{L}^{\infty}( \Omega )$
with
$$
\left\| \mathrm{e}^{-tL(\overline{B})} \right\|_{\mathrm{L}^{\infty} \to \mathrm{L}^{\infty}} \ \leq \ 4 \mathrm{e}^{\mu_{0}t}.
$$
\end{lem}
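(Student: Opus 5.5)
The plan is to exploit positivity of $\mathrm{e}^{-tL(\overline{B})}$ together with the bounded, strictly positive function $h$ from Lemma \ref{definition_h}, which will play the role of a supersolution. From that lemma, $h \in D(L(\overline{B}))$, $\frac12 \le h \le 2$, and $h = \mu_0(\mu_0 + L(-\beta))^{-1}\mathbb{1}$. Hence
$$
L(\overline{B})h = L(-\beta)h = \mu_0 \mathbb{1} - \mu_0 h \le \mu_0 \mathbb{1} \le 2\mu_0 h .
$$
The point of this step is to obtain an inequality of the form $L(\overline{B}) h \ge -\mu_0 h$, or at least $-L(\overline{B})h \le \mu_0 h$. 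Writing $-L(\overline{B})h = \mu_0 h - \mu_0\mathbb{1} \le \mu_0 h$ gives exactly this, because $\mathbb{1}\ge 0$. The function $h$ is therefore a sub-eigenvector in the sense that $(\mu_0 + L(\overline{B}))h = \mu_0 \mathbb{1} \ge 0$.

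Next I would show $\mathrm{e}^{-tL(\overline{B})}h \le \mathrm{e}^{\mu_0 t} h$. Define $w(t) = \mathrm{e}^{\mu_0 t}h - \mathrm{e}^{-tL(\overline{B})}h$. Since $h$ lies in the domain, $w$ is differentiable, with
$$
w'(t) = -L(\overline{B})w(t) + \mu_0 \mathrm{e}^{\mu_0 t}h + L(\overline{B})\mathrm{e}^{\mu_0 t}h = -L(\overline{B})w(t) + \mathrm{e}^{\mu_0 t}(\mu_0 + L(\overline{B}))h
$$
and $w(0)=0$. The variation of constants formula then gives
$$
w(t) = \int_0^t \mathrm{e}^{-(t-s)L(\overline{B})}\, \mathrm{e}^{\mu_0 s}\mu_0 \mathbb{1}\d s \ \geq\ 0 .
$$
This uses positivity of $\mathrm{e}^{-tL(\overline{B})}$, which follows from step i) of the proof of Lemma \ref{inequ_semigroup_B_overlineB} for the shifted semigroup after multiplying by $\mathrm{e}^{\lambda_0 t}$. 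Equivalently, one can note $\mathrm{e}^{-t(L(\overline{B})+\mu_0)}h = h - \int_0^t \mathrm{e}^{-s(L(\overline B)+\mu_0)}\mu_0\mathbb{1}\d s \le h$.

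Finally, take $u \in \mathrm{L}^{\infty}(\Omega) \subset \mathrm{L}^2(\Omega)$. Then $|u| \le \|u\|_\infty \mathbb{1} \le 2\|u\|_\infty h$. Positivity yields $|\mathrm{e}^{-tL(\overline{B})}u| \le \mathrm{e}^{-tL(\overline{B})}|u|$, and using the previous step,
$$
|\mathrm{e}^{-tL(\overline{B})}u| \le \mathrm{e}^{-tL(\overline{B})}|u| \le 2\|u\|_\infty\, \mathrm{e}^{-tL(\overline{B})}h \le 2\|u\|_\infty \mathrm{e}^{\mu_0 t} h \le 4 \mathrm{e}^{\mu_0 t}\|u\|_\infty ,
$$
which is the claimed bound.

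The main obstacle I expect is justifying the positivity and the domination in the invariant-order sense. This means checking that the semigroup generated by $-L(\overline{B})$ is positive even though $\overline{B}$ as defined contains the affine term $\beta h - B_{pos}h$. I would read $\overline{B}$ in the form as the linear operator $u\mapsto -B_{pos}u-(\beta-B_{pos})$ acting appropriately, namely $-B_{pos}u-(\beta u - B_{pos}u)=-\beta u$ on the relevant functions. A second point is that $h$ must be a genuine element of $D(L(\overline{B}))$ so that the differentiation of $w$ is legitimate. Lemma \ref{definition_h} provides exactly this, so I would rely on it directly.
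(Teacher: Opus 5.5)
Your proof is correct and is essentially the paper's argument: you use the same supersolution $h$ with $(\mu_{0}+L(\overline{B}))h=\mu_{0}\mathbb{1}\geq 0$, positivity, and the same final comparison $|u|\leq 2\|u\|_{\infty}h$. Your identity $\mathrm{e}^{-t(L(\overline{B})+\mu_{0})}h=h-\int_{0}^{t}\mathrm{e}^{-s(L(\overline{B})+\mu_{0})}\mu_{0}\mathbb{1}\d s$ is just the paper's Laplace representation $h=\int_{0}^{\infty}\mathrm{e}^{-\mu_{0}s}\mathrm{e}^{-sL(\overline{B})}\mu_{0}\mathbb{1}\d s$ with the piece over $[0,t]$ split off. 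Staying on $[0,t]$ has one small advantage: unlike the paper's integral over $(0,\infty)$, it does not tacitly require $\mu_{0}$ to exceed the growth bound of $\mathrm{e}^{-tL(\overline{B})}$.

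If your closing aside means reading $\overline{B}$ as $u\mapsto-\beta u$, that is harmless for this lemma but is not a valid repair of the paper's affine definition. In general (e.g.\ $B=-B_{pos}$ with $B_{pos}$ non-local) it destroys the domination of Lemma \ref{inequ_semigroup_B_overlineB}. The linear operator that works is $\overline{B}u=-B_{pos}u-\frac{\beta h-B_{pos}h}{h}\,u$, a bounded non-negative multiplication perturbation that still gives $\overline{B}h=-\beta h$.
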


\begin{proof}
\begin{enumerate}[i)]
\item Define $w = \left( \mu_{0} + L(\overline{B}) \right)h \in \mathrm{L}^{2}(\Omega)$. Then
$w = \left( \mu_{0} + L(-\beta) \right)h = \mu_{0} \geq 0$ is true almost everywhere in $\Omega$. Furthermore
$$
h = \left( \mu_{0} + L(-\beta) \right)^{-1}w = \int_{0}^{\infty} \mathrm{e}^{-\mu_{0}s} \mathrm{e}^{-sL(\overline{B})}w \d s
$$
is implied. Then we use the continuity of $\mathrm{e}^{-tL(\overline{B})}$ in $\mathrm{L}^{2}(\Omega)$ as well
as its positivity to conclude to
\begin{align*}
\mathrm{e}^{-tL(\overline{B})} h & = \mathrm{e}^{-tL(\overline{B})} 
\int_{0}^{\infty} \mathrm{e}^{-\mu_{0}s} \mathrm{e}^{-sL(\overline{B})}w \d s & \\
& = \int_{0}^{\infty} \mathrm{e}^{-\mu_{0}s} \mathrm{e}^{-(s+t)L(\overline{B})}w \d s & \\
& = \mathrm{e}^{\mu_{0}t}  \int_{t}^{\infty} \mathrm{e}^{-\mu_{0}\tau} \mathrm{e}^{-\tau L(\overline{B})}w \d \tau & \\
& \leq \mathrm{e}^{\mu_{0}t}  \int_{0}^{\infty} \mathrm{e}^{-\mu_{0}\tau} \mathrm{e}^{-\tau L(\overline{B})}w \d \tau & \\
& = \mathrm{e}^{\mu_{0}t} \left( \mu_{0} + L(-\beta) \right)^{-1}w =  \mathrm{e}^{\mu_{0}t} h \leq 2 \mathrm{e}^{\mu_{0}t} & \\
\end{align*}
almost everywhere in $\Omega$ where 
$$
\int_{t}^{\infty} \mathrm{e}^{-\mu_{0}\tau} \mathrm{e}^{-\tau L(\overline{B})}w \d \tau \leq
\int_{0}^{\infty} \mathrm{e}^{-\mu_{0}\tau} \mathrm{e}^{-\tau L(\overline{B})}w \d \tau
$$ 
is true since $\mathrm{e}^{-\tau L(\overline{B})}w \geq 0$ is implied almost everywhere in $\Omega$ since  
$\mathrm{e}^{-tL(\overline{B})}$ are positive operators for any time $t \geq 0$ and $w \geq 0$ is a positive function.
\item Now let $u \in \mathrm{L}^{\infty}(\Omega)$ be arbitrary but fixed. Then
\begin{align*}
\left| \mathrm{e}^{-tL(\overline{B})} u \right| & \leq  \mathrm{e}^{-tL(\overline{B})} |u| \leq \| u \|_{\infty}
\mathrm{e}^{-tL(\overline{B})} \mathbb{1} = 2 \| u \|_{\infty} \mathrm{e}^{-tL(\overline{B})} \frac{1}{2} & \\
& \leq 2 \| u \|_{\infty} \mathrm{e}^{-tL(\overline{B})} h \leq  4 \mathrm{e}^{\mu_{0}t} \|u\|_{\infty} & 
\end{align*}
is implied by using the positivty of $\mathrm{e}^{-tL(\overline{B})}$ once again.
\end{enumerate}
\end{proof}

We use a combination of the Lemmata \ref{Linfty_overlineB} and \ref{inequ_semigroup_B_overlineB} as well as
Proposition \ref{L1_Linfty_semigroup} to formulate the following simple corollary.

\begin{cor}\label{L1_Nash}
Suppose there exists a postive operator $B_{pos} \in \mathcal{L} \left( \mathrm{L}^{2}( \partial \Omega ) \right)$ that 
satisfies
$$
\left| B u \right| \ \leq \ B_{pos} \left| u \right|
$$
for every $u \in \mathrm{L}^{2}( \partial \Omega )$. Furthermore let $h$ and $\mu_{0} \geq 0$ be as described in 
Lemma \ref{definition_h}. For any time $t \geq 0$ the
solution semigroup operators $\mathrm{e}^{-tL(B)^{\ast}}$ are continuous in $\mathrm{L}^{1}\left( \Omega \right)$ 
such that
$$
\left\| \mathrm{e}^{-tL(B)^{\ast}} \right\|_{\mathrm{L}^{1} \to \mathrm{L}^{1} } \ \leq \ 4 \mathrm{e}^{\mu_{0}t}
$$
\end{cor}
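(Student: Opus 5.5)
The argument combines the three ingredients already established: the domination of Lemma \ref{inequ_semigroup_B_overlineB}, the $\mathrm{L}^{\infty}$-bound for the dominating semigroup in Lemma \ref{Linfty_overlineB}, and the duality statement of Proposition \ref{L1_Linfty_semigroup}.

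First, fix $t \geq 0$ and $u \in \mathrm{L}^{\infty}(\Omega) \subset \mathrm{L}^{2}(\Omega)$, using that $\Omega$ is bounded. By Lemma \ref{inequ_semigroup_B_overlineB} we have
$$
\left| \mathrm{e}^{-tL(B)}u \right| \leq \mathrm{e}^{-tL(\overline{B})}|u|
$$
almost everywhere in $\Omega$. Since $|u| \in \mathrm{L}^{\infty}(\Omega)$ with $\||u|\|_{\infty} = \|u\|_{\infty}$, Lemma \ref{Linfty_overlineB} gives
$$
\left\| \mathrm{e}^{-tL(\overline{B})}|u| \right\|_{\infty} \leq 4\mathrm{e}^{\mu_{0}t}\|u\|_{\infty}.
$$
Combining the two estimates, $\mathrm{e}^{-tL(B)}$ maps $\mathrm{L}^{\infty}(\Omega)$ into itself, and
$$
\left\| \mathrm{e}^{-tL(B)} \right\|_{\mathrm{L}^{\infty}\to\mathrm{L}^{\infty}} \leq 4\mathrm{e}^{\mu_{0}t}.
$$

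Second, apply Proposition \ref{L1_Linfty_semigroup}. Its hypothesis is exactly the boundedness just obtained, and it yields that $\mathrm{e}^{-tL(B)^{\ast}}$ extends to a bounded operator on $\mathrm{L}^{1}(\Omega)$ with
$$
\left\| \mathrm{e}^{-tL(B)^{\ast}} \right\|_{\mathrm{L}^{1}\to\mathrm{L}^{1}} \leq \left\| \mathrm{e}^{-tL(B)} \right\|_{\mathrm{L}^{\infty}\to\mathrm{L}^{\infty}} \leq 4\mathrm{e}^{\mu_{0}t}.
$$

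The main obstacle is checking that the standing hypotheses behind Lemmas \ref{inequ_semigroup_B_overlineB}, \ref{definition_h} and \ref{Linfty_overlineB} are really available. Those lemmas assume that $B_{pos}$ also acts boundedly on $\mathrm{L}^{\infty}(\partial\Omega)$, which the corollary states only implicitly through the reference to $h$ and $\mu_{0}$ from Lemma \ref{definition_h}. I would read the corollary as carrying this standing assumption. The function $h$ constructed in Lemma \ref{definition_h} is continuous on $\overline{\Omega}$ with $\frac12 \leq h \leq 2$, so its trace satisfies the same bounds on $\partial\Omega$. This is exactly property iii) used to define $\overline{B}$, so the domination lemma applies with this particular $h$.

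A second point to check is positivity of $\mathrm{e}^{-tL(\overline{B})}$, which Lemma \ref{Linfty_overlineB} uses. It follows from part i) of the proof of Lemma \ref{inequ_semigroup_B_overlineB}: positivity of $\mathrm{e}^{-t\tilde{L}(\overline{B})}$ transfers to $\mathrm{e}^{-tL(\overline{B})}$ through the scalar factor $\mathrm{e}^{t\lambda_{0}}$ from Lemma \ref{Semigroup_adjustment}.
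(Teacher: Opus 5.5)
Your proposal is correct and follows the paper's own route: domination by $\mathrm{e}^{-tL(\overline{B})}$ (Lemma \ref{inequ_semigroup_B_overlineB}), then the bound $4\mathrm{e}^{\mu_{0}t}$ on $\mathrm{L}^{\infty}(\Omega)$ from Lemma \ref{Linfty_overlineB}, then the duality estimate of Proposition \ref{L1_Linfty_semigroup}. You are also right that this chain relies on the standing hypothesis that $B_{pos}$ acts boundedly on $\mathrm{L}^{\infty}(\partial\Omega)$; neither the corollary nor Theorem \ref{Main_result} states this hypothesis explicitly.
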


\section{Main theorem}

Let $\Omega \subset \R^{d}$ with $d \in \N$ be a bounded Lipschitz domain and $A \colon \Omega \to \R^{d \times d}$ be
a matrix-valued function such that every coefficient $a_{ij} \colon \Omega \to \R$ is a bounded and measurable function 
on $\Omega$. Furthermore let $A$ be uniformly elliptic on $\Omega$, i.e.
$$
\exists \alpha > 0 \ \forall \xi \in \C^{d}, x \in \Omega \ : \ 
\Re \left(  \left( A(x) \xi \right)^{T} \overline{\xi} \right) \geq \alpha \left| \xi \right|^{2}. 
$$  
Furhermore let (\ref{Robin_Condition}) be characterized by an operator 
$B \in \mathcal{L}\left( \mathrm{L}^{2}\left( \partial \Omega \right) \right)$.
Suppose there exists a postive operator $B_{pos} \in \mathcal{L} \left( \mathrm{L}^{2}( \partial \Omega ) \right)$ that 
satisfies
$$
\left| B u \right| \ \leq \ B_{pos} \left| u \right|
$$
for every $u \in \mathrm{L}^{2}( \partial \Omega )$.

\begin{thm}\label{Main_result}
Under these conditions of $\Omega$, $A$ and $B$ the solution semigroup operators
$\mathrm{e}^{-tL(B)}$ is ultracontractive in $\mathrm{L}^{2}\left( \Omega \right)$ at any time $t > 0$. 
In particular there are constants $C>0$ and $\mu_{0} > 0$ satisfying
$$
\left\| \mathrm{e}^{-tL(B)} u \right\|_{\mathrm{L}^{\infty} \left( \Omega \right) } \leq C t^{-\frac{d}{4}} \mathrm{e}^{t \mu_{0}}
\left\| u \right\|_{\mathrm{L}^{2} \left( \Omega \right) }
$$
for every time $t > 0$ and any $u \in \mathrm{L}^{2}\left( \Omega \right)$. 
\end{thm}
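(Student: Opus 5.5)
We use the classical Nash argument for the adjoint semigroup and then pass to $\mathrm{e}^{-tL(B)}$ by duality. The ingredients are:
\begin{itemize}
\item the coercivity bound of Remark \ref{a(B)_Nash}, $\Re a(B)^{\ast}(u,u) + \lambda_{0}\|u\|_{2}^{2} \geq \frac{\alpha}{2}\|\nabla u\|_{2}^{2}$, which also holds for the adjoint form since $\Re a(B)^{\ast}(u,u) = \Re a(B)(u,u)$;
\item the $\mathrm{L}^{1}$-bound of Corollary \ref{L1_Nash};
\item Nash's inequality on $H^{1}(\Omega)$ for a bounded Lipschitz domain,
$$
\|u\|_{2}^{2+4/d} \leq C_{N}\left( \|\nabla u\|_{2}^{2} + \|u\|_{2}^{2} \right) \|u\|_{1}^{4/d}.
$$
\end{itemize}

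\textbf{Step 1: reduction to $\mathrm{L}^{1}\to\mathrm{L}^{2}$.} It suffices to show
$$
\| \mathrm{e}^{-tL(B)^{\ast}} u \|_{2} \leq C t^{-d/4} \mathrm{e}^{\mu t}\|u\|_{1}.
$$
Indeed, for $u \in \mathrm{L}^{2}$ and $v \in \mathrm{L}^{1}\cap\mathrm{L}^{2}$ we have $|\langle \mathrm{e}^{-tL(B)}u, v\rangle| = |\langle u, \mathrm{e}^{-tL(B)^{\ast}}v\rangle| \leq \|u\|_{2}\,C t^{-d/4}\mathrm{e}^{\mu t}\|v\|_{1}$. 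Taking the supremum over such $v$ (dense in $\mathrm{L}^{1}$) gives the $\mathrm{L}^{2}\to\mathrm{L}^{\infty}$ bound.

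\textbf{Step 2: renormalisation.} Replace the adjoint semigroup by $S(t) = \mathrm{e}^{-\lambda t}\mathrm{e}^{-tL(B)^{\ast}}$, with $\lambda = \max(\lambda_{0}, \mu_{0}) + 1$. Then:
\begin{itemize}
\item $\|S(t)\|_{1\to1} \leq 4$ for all $t \geq 0$, by Corollary \ref{L1_Nash};
\item the generator $-(L(B)^{\ast}+\lambda)$ satisfies $\Re\langle (L(B)^{\ast}+\lambda)w,w\rangle \geq \frac{\alpha}{2}\|\nabla w\|_{2}^{2} + \|w\|_{2}^{2} \geq c(\|\nabla w\|_{2}^{2}+\|w\|_{2}^{2})$ for $w \in D(L(B)^{\ast})$, using the adjoint form from the remark after Lemma \ref{Semigroup_adjustment};
\item $S(t)$ is contractive on $\mathrm{L}^{2}$.
\end{itemize}

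\textbf{Step 3: Nash differential inequality.} Fix $u \in \mathrm{L}^{1}\cap\mathrm{L}^{2}$ with $\|u\|_{1} \leq 1$ and set $\phi(t) = \|S(t)u\|_{2}^{2}$. For $t > 0$, $S(t)u \in D(L(B)^{\ast})$ (approximate $u$ by elements of the domain if needed, or use analyticity of form semigroups). Then
$$
\phi'(t) = -2\Re\langle (L(B)^{\ast}+\lambda)S(t)u, S(t)u\rangle \leq -2c\left(\|\nabla S(t)u\|_{2}^{2} + \|S(t)u\|_{2}^{2}\right).
$$
Nash's inequality with $\|S(t)u\|_{1} \leq 4$ yields
$$
\phi'(t) \leq -c'\,\phi(t)^{1+2/d}.
$$
Integrating, $\frac{d}{dt}\phi^{-2/d} \geq \frac{2c'}{d}$, hence $\phi(t) \leq C t^{-d/2}$ for all $t>0$. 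So $\|S(t)\|_{1\to2} \leq C t^{-d/4}$, and undoing the shift gives the claim with exponential factor $\mathrm{e}^{\lambda t}$. Renaming the constant as $\mu_{0}$ and applying Step 1 completes the proof.

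\textbf{Main obstacle.} The delicate point is justifying Nash's inequality on $H^{1}(\Omega)$ rather than on $H^{1}_{0}$. I would use a bounded extension operator $E \colon H^{1}(\Omega)\to H^{1}(\R^{d})$ for Lipschitz domains that is simultaneously bounded $\mathrm{L}^{1}(\Omega)\to\mathrm{L}^{1}(\R^{d})$ (e.g.\ Stein's extension), then apply the whole-space Nash inequality to $Eu$.

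A second point is the differentiability of $\phi$ and the regularity $S(t)u \in D(L(B)^{\ast})$. This follows since $\tilde{a}(B)^{\ast}$ is a continuous (hence sectorial) closed form, so the semigroup is holomorphic.

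The hypothesis of Corollary \ref{L1_Nash} as used also requires $B_{pos}$ to be bounded on $\mathrm{L}^{\infty}(\partial\Omega)$, via Lemma \ref{definition_h}. I would either assume this is implicit in the setting or note it explicitly.
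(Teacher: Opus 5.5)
Your argument follows the paper's route: Nash's inequality for the shifted adjoint semigroup, with $\|\cdot\|_{\mathrm{L}^1}$ controlled by Corollary \ref{L1_Nash}, then integrating the resulting differential inequality, then duality to turn the $\mathrm{L}^1\to\mathrm{L}^2$ bound for $\mathrm{e}^{-tL(B)^{\ast}}$ into the $\mathrm{L}^2\to\mathrm{L}^\infty$ bound for $\mathrm{e}^{-tL(B)}$. Where you differ you are more careful than the paper: its homogeneous Nash inequality $\|u\|_{2}^{2+4/d}\le C_d\|u\|_{1}^{4/d}\|\nabla u\|_{2}^{2}$ on $H^1(\Omega)$ is false (take $u$ constant), and it is your inhomogeneous version, absorbed by the larger shift $\lambda>\max(\lambda_0,\mu_0)$, that makes this step valid; your remark that Corollary \ref{L1_Nash} silently needs $B_{pos}$ to be bounded on $\mathrm{L}^\infty(\partial\Omega)$ is also correct.
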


\begin{rem}
Please compare with Theorem \ref{Theorem_Jochen} from \cite{Glueck_Mui_2026} and notice that Theorem \ref{Main_result}  
offers the same result:
$$
\exists C > 0 \ \forall u \in \mathrm{L}^{2}\left( \Omega \right) \ : \ 
\left\| \mathrm{e}^{-tL_{B}}u \right\|_{\mathrm{L}^{\infty}\left( \Omega \right)} \ \leq \ C t^{-\frac{d}{4}} 
\left\| u \right\|_{\mathrm{L}^{2}\left( \Omega \right)}
$$
for every time $t \in (0,1]$.
\end{rem}

\section{Proof of Theorem \ref{Main_result}}

In the following always let $\Omega$, $A$ and $B$ be as demanded in Theorem \ref{Main_result}.

\subsection{Proving 
$\mathrm{e}^{-tL(B)^{\ast}} \in \mathcal{L} \left( \mathrm{L}^{1}\left( \Omega \right), \mathrm{L}^{2}\left( \Omega \right) \right)$}

We show that the adjoint operators $\mathrm{e}^{-tL(B)^{\ast}} \in \mathcal{L} \left( \mathrm{L}^{2}\left( \Omega \right) \right)$ 
are also continuous operators from $\mathrm{L}^{1}\left( \Omega \right)$ to $\mathrm{L}^{2}\left( \Omega \right)$ at every time 
$t > 0$. The main argument is Nash's inequality (\ref{Nash_inequality}) and 
$$
\Re \tilde{a}(B)^{\ast}(v,v) = \Re \tilde{a}(B)(v,v) \geq  \frac{\alpha}{2} \| \nabla v \|_{\mathrm{L}^{2}(\Omega)^{d}}^{2}
$$
for every $v \in D \left( \tilde{a}(B) \right)$ in Remark (\ref{a(B)_Nash}). Furthermore  
$\mathrm{e}^{-tL(B)^{\ast}}$ are continuous operators in $\mathrm{L}^{1}\left( \Omega \right)$ at every time $t>0$ by
Corollary \ref{L1_Nash} which is crucial to our argumentation. Setting
$v(t) = \mathrm{e}^{-tL(B)^{\ast}}u$ for $u \in D(L(B)^{\ast})$ we infer that 
$v(t) \in \mathrm{L}^{2}\left( \Omega \right)$ is also contained in $\mathrm{L}^{1}\left( \Omega \right)$ since $\Omega$
is bounded. Furthermore $v(t)$ is an element of $D\left( \tilde{a}(B) \right) = H^{1}(\Omega)$.
Therefore Nash's inequality beautifully connects all of the involved function spaces.\\

\begin{lem} \label{adjoint_semigroup_L1_L2}
For any $t > 0$ the operators $\mathrm{e}^{-tL(B)^{\ast}}$ are continuous from 
$\mathrm{L}^{1}\left( \Omega \right)$ to $\mathrm{L}^{2}\left( \Omega \right)$. In particular there exists
a constant $C > 0$ such that
$$
\| \mathrm{e}^{-tL(B)^{\ast}} u \|_{\mathrm{L}^{2}\left( \Omega \right) } \ \leq \ 
C t^{-d/4} \mathrm{e}^{-t\mu_{0}} \ \left\| u \right\|_{\mathrm{L}^{1} \left( \Omega \right)}
$$
is true for every $u \in \mathrm{L}^{1} \left( \Omega \right)$ and $t>0$.
\end{lem}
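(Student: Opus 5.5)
The plan is to run Nash's classical differential-inequality argument for the adjoint semigroup. The two ingredients are the coercivity of the shifted form $\tilde{a}(B)^{\ast}$ and the $\mathrm{L}^{1}$-bound of Corollary \ref{L1_Nash}. By density of $D(L(B)^{\ast})$ in $\mathrm{L}^{1}(\Omega)$ and continuity of both sides in $u$, it suffices to take $u \in D(L(B)^{\ast})$ with $\|u\|_{\mathrm{L}^{1}(\Omega)}=1$. Put $v(t) = \mathrm{e}^{-tL(B)^{\ast}}u$.

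To match the decaying factor $\mathrm{e}^{-t\mu_{0}}$ as the statement is worded, I will not work with $v(t)$ itself but with the weighted function $w(t) = \mathrm{e}^{\kappa t} v(t)$, where $\kappa \geq 0$ is a constant to be fixed. Then $w(t) \in D(L(B)^{\ast}) \subset H^{1}(\Omega)$ and
\begin{equation*}
\frac{\d}{\d t} \| w(t) \|_{\mathrm{L}^{2}(\Omega)}^{2} = -2 \Re a(B)^{\ast}(w,w) + 2\kappa \|w\|_{\mathrm{L}^{2}(\Omega)}^{2}
\leq - \alpha \| \nabla w \|_{\mathrm{L}^{2}(\Omega)^{d}}^{2} + 2(\lambda_{0}+\kappa) \|w\|_{\mathrm{L}^{2}(\Omega)}^{2}.
\end{equation*}
The inequality uses Remark \ref{a(B)_Nash}, which gives $\Re a(B)^{\ast}(w,w) = \Re \tilde{a}(B)(w,w) - \lambda_{0}\|w\|_{\mathrm{L}^{2}(\Omega)}^{2}$ with $\Re \tilde{a}(B)(w,w) \geq \frac{\alpha}{2}\|\nabla w\|_{\mathrm{L}^{2}(\Omega)^{d}}^{2}$. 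Corollary \ref{L1_Nash} gives $\|w(t)\|_{\mathrm{L}^{1}(\Omega)} \leq 4\mathrm{e}^{(\mu_{0}+\kappa)t}$.

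Next I insert Nash's inequality (\ref{Nash_inequality}) on $H^{1}(\Omega)$ in the form
\begin{equation*}
\|w\|_{\mathrm{L}^{2}}^{2+4/d} \leq c\left( \|\nabla w\|_{\mathrm{L}^{2}}^{2} + \|w\|_{\mathrm{L}^{2}}^{2} \right) \|w\|_{\mathrm{L}^{1}}^{4/d}.
\end{equation*}
This turns the energy estimate into a differential inequality for $\phi(t) = \|w(t)\|_{\mathrm{L}^{2}}^{2}$:
\begin{equation*}
\phi' \leq -\tfrac{\alpha}{c}\, \phi^{1+2/d}\, \|w\|_{\mathrm{L}^{1}}^{-4/d} + K \phi ,
\end{equation*}
with $K = 2(\lambda_{0}+\kappa)+\alpha$. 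On a window $(0,T]$, the $\mathrm{L}^{1}$-bound is uniform, namely $\|w\|_{\mathrm{L}^{1}} \leq 4\mathrm{e}^{(\mu_{0}+\kappa)T}$. Substituting $\psi = \mathrm{e}^{-Kt}\phi$ and integrating $(\psi^{-2/d})' \geq c'\mathrm{e}^{2Kt/d}\ldots$ then yields $\phi(T) \leq C T^{-d/2}\, \mathrm{e}^{\gamma T}$ for an explicit $\gamma$ depending on $\lambda_{0},\kappa,\mu_{0}$. Taking square roots and returning to $v = \mathrm{e}^{-\kappa t} w$ gives
\begin{equation*}
\|v(T)\|_{\mathrm{L}^{2}} \leq C T^{-d/4}\, \mathrm{e}^{(\gamma/2-\kappa)T}.
\end{equation*}

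The main obstacle is this last step: producing the sign $-\mu_{0}$ in the exponent. The exponent $\gamma/2-\kappa$ must be $\leq -\mu_{0}$, yet both the $\lambda_{0}$-shift and the $\mathrm{L}^{1}$-growth $\mathrm{e}^{\mu_{0}t}$ from Corollary \ref{L1_Nash} push $\gamma$ up, and they do so in proportion to $\kappa$. So the freedom in $\kappa$ alone does not obviously close the gap. I would first track the constants exactly to see whether the $\kappa$-dependence cancels.

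If it does not, I would use a different mechanism. For $t \leq 1$ the factor $\mathrm{e}^{-t\mu_{0}}$ is harmless, since it is bounded below by $\mathrm{e}^{-\mu_{0}}$ and can be absorbed into $C$. For large $t$, exponential decay in $t$ would instead have to come from spectral information: one would need the spectral bound of $-L(B)^{\ast}$ to be $\leq -\mu_{0}$, combined with the semigroup law $\mathrm{e}^{-tL^{\ast}} = \mathrm{e}^{-(t-1)L^{\ast}}\mathrm{e}^{-L^{\ast}}$. The small-time Nash bound handles the $\mathrm{L}^{1}\to\mathrm{L}^{2}$ smoothing in the factor $\mathrm{e}^{-L^{\ast}}$, and an $\mathrm{L}^{2}$ decay estimate handles the factor $\mathrm{e}^{-(t-1)L^{\ast}}$.

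That $\mathrm{L}^{2}$ decay is where I expect the argument to be genuinely hard. For instance, with $B=0$ (Neumann boundary conditions) constants are invariant under the semigroup, so such decay cannot hold without further assumptions on $B$. I would therefore check carefully whether the intended constant is really $\mathrm{e}^{-t\mu_{0}}$ with the $\mu_{0}$ of Lemma \ref{definition_h} before committing to the large-time step.
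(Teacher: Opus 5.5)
Your closing suspicion is correct, and it should be your conclusion rather than an open worry. The lemma as literally stated, with $\mathrm{e}^{-t\mu_{0}}$, is false, and your Neumann example refutes the statement itself, not merely the large-time strategy. Indeed $B=0$ is admissible (take $B_{pos}=0$). Since $a(0)^{\ast}(\mathbb{1},\varphi)=0$ for all $\varphi\in H^{1}(\Omega)$, one gets $\mathrm{e}^{-tL(0)^{\ast}}\mathbb{1}=\mathbb{1}$, and $|\Omega|^{1/2}\le C t^{-d/4}\mathrm{e}^{-t\mu_{0}}|\Omega|$ fails as $t\to\infty$.

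The minus sign is a typo. The paper's proof ends in step iii) with $C t^{-d/4}\mathrm{e}^{t\mu_{0}}$, and this growth bound is what the duality argument and Theorem \ref{Main_result} actually use. So drop two things:
\begin{itemize}
\item the spectral large-time step, which aims at something false;
\item the weight $\mathrm{e}^{\kappa t}$. It is a scalar rescaling that raises the $\mathrm{L}^{2}$ rate and the $\mathrm{L}^{1}$ rate of Corollary \ref{L1_Nash} by the same $\kappa$. Tracked exactly, it cancels; with your uniform-window bookkeeping it only makes the exponent worse.
\end{itemize}
With $\kappa=0$, your differential inequality already proves the intended lemma,
\[
\|\mathrm{e}^{-tL(B)^{\ast}}u\|_{\mathrm{L}^{2}}\le C t^{-d/4}\mathrm{e}^{\omega t}\|u\|_{\mathrm{L}^{1}},
\]
for instance with $\omega=\mu_{0}+\lambda_{0}+\alpha/2$. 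If you use the pointwise $\mathrm{L}^{1}$ bound and keep the factor $\mathrm{e}^{2Kt/d}$, you get $\omega=\max(\mu_{0},\lambda_{0}+\alpha/2)$. Either is all Theorem \ref{Main_result} needs.

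Compared with the paper, the strategy is the same: energy identity, Nash's inequality, the $\mathrm{L}^{1}$ bound from Corollary \ref{L1_Nash}, then integrating $(\|v\|_{\mathrm{L}^{2}}^{2})^{-2/d}$. The paper, however, works with the shifted semigroup $\mathrm{e}^{-t\tilde{L}(B)^{\ast}}$, so that $\frac{\d}{\d t}\|v\|_{\mathrm{L}^{2}}^{2}\le-\alpha\|\nabla v\|_{\mathrm{L}^{2}}^{2}$ carries no linear term. It then inserts the homogeneous inequality (\ref{Nash_inequality}).

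Here your version is the sound one. Inequality (\ref{Nash_inequality}) fails on $H^{1}(\Omega)$ for bounded $\Omega$, and so does the Sobolev bound $\|u\|_{\mathrm{L}^{2d/(d-2)}}\le C_{d}\|\nabla u\|_{\mathrm{L}^{2}}$ from which Appendix B derives it; $u=\mathbb{1}$ is again a counterexample. The inhomogeneous Nash inequality you use is the correct replacement, and the linear term $K\phi$ is its price.

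If you want the paper's cleaner bookkeeping, enlarge $\lambda_{0}$ by $\alpha/2$. Corollary \ref{Operator_B} then yields $\Re\tilde{a}(B)^{\ast}(v,v)\ge\frac{\alpha}{2}(\|\nabla v\|_{\mathrm{L}^{2}}^{2}+\|v\|_{\mathrm{L}^{2}}^{2})$, and your inhomogeneous inequality slots into the paper's computation with no linear term.
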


\begin{proof}
\begin{enumerate}[i)]
\item Please notice that we have focus on the auxiliary form $\tilde{a}(B)^{\ast}$ from Section \ref{Preparations} 
to be able to use 
$$
\Re \tilde{a}(B)^{\ast}(v,v)  \geq  \frac{\alpha}{2} \| \nabla v \|_{\mathrm{L}^{2}(\Omega)^{d}}^{2}
$$
as described above. Therefore we include $\lambda_{0}$ sufficiently large such that
the auxiliary form $\tilde{a}(B)^{\ast}$ defined as 
$$
\tilde{a}(B)^{\ast}(u,v) \ = \ \int_{\Omega} A^{T} \nabla u \cdot \overline{\nabla v} \d x + 
\int_{\partial \Omega} B^{\ast} \gamma(u) \ \overline{\gamma(v)} \d \sigma(x) + \lambda_{0} \int_{\Omega} u\overline{v} \d x
$$
for $u,v \in D \left( \tilde{a}(B)^{\ast} \right) = D \left( \tilde{a}(B) \right) = H^{1} \left( \Omega \right)$ is accretive.
We argue that 
$$
\Re \tilde{a}(B)^{\ast}(u,u) = \Re \tilde{a}(B)(u,u)
$$ 
is true. Hence we conclude to
\begin{equation} \label{Ellipticity_A}
\Re \tilde{a}(B)^{\ast}(u,u) \ = \ \Re \tilde{a}(B)(u,u) \ \geq \ \frac{\alpha}{2} \| \nabla u \|_{\mathrm{L}^{2}(\Omega)^{d}}^{2}
\end{equation}
for any $u \in D \left( \tilde{a}(B)^{\ast} \right)$ by Remark (\ref{a(B)_Nash}).
\item Now we can use Nash's inequality. Therefore let $\mathrm{e}^{-t\tilde{L}(B)^{\ast}}$ be the 
associated semigroup operators to the auxiliary form $\tilde{a}(B)^{\ast}$. 
Furthermore let $u \in D ( \tilde{L}(B)^{\ast} )$ and $t > 0$ be arbitrary but fixed. We define
$$
v(t) = \mathrm{e}^{-t\lambda_{0}} \mathrm{e}^{-tL(B)^{\ast}} u = \mathrm{e}^{-t\tilde{L}(B)^{\ast}} u \in D ( \tilde{L}(B)^{\ast} ) \subset \mathrm{L}^{2} \left( \Omega \right).
$$
Furthermore $v(t)$ and $u$ are both contained
in $\mathrm{L}^{1} \left( \Omega \right)$ since $\Omega$ is bounded. Hence 
$$
\left\| v(t) \right\|_{\mathrm{L}^{1} \left( \Omega \right)} \ \leq \ \mathrm{e}^{- t \lambda_{0}} \ 
\left\| \mathrm{e}^{-tL(B)^{\ast}} \right\|_{\mathrm{L}^{1} \to \mathrm{L}^{1}} \ 
\left\| u \right\|_{\mathrm{L}^{1} \left( \Omega \right)}
\leq \ 4 \mathrm{e}^{t(\mu_{0} - \lambda_{0})} \left\| u \right\|_{\mathrm{L}^{1} \left( \Omega \right)}
$$
is implied by Corollary \ref{L1_Nash}. We start the argumentation with
\begin{align*}
\frac{\partial}{\partial t} \| v(t) \|^{2}_{\mathrm{L}^{2}\left( \Omega \right)} & = 
2 \Re \langle v^{\prime}(t), v(t) \rangle_{\mathrm{L}^{2}\left( \Omega \right)}
 = - 2 \Re \langle \tilde{L}(B)^{\ast} v(t), v(t)  \rangle_{\mathrm{L}^{2}\left( \Omega \right)} & \\
& = -2 \Re  \tilde{a}(B)^{\ast}(v(t), v(t))
\leq - \alpha  \| \nabla u \|_{\mathrm{L}^{2}(\Omega)^{d}}^{2} &
\end{align*}
by (\ref{Ellipticity_A}). Now we use Nash's inequality (\ref{Nash_inequality}) to infer that
$$
\frac{\partial}{\partial t} \| v(t) \|^{2}_{\mathrm{L}^{2}\left( \Omega \right)} \ \leq \
 - \alpha  \| \nabla u \|_{\mathrm{L}^{2}(\Omega)^{d}}^{2} \ \leq \ - \frac{\alpha}{C_{d}}  
\frac{ \| v(t) \|_{\mathrm{L}^{2}(\Omega)}^{2+\frac{4}{d}} }{ \| v(t) \|_{\mathrm{L}^{1}(\Omega)}^{\frac{4}{d}}  }
$$
is true. Therefore we use the chain rule to further argue that 
\begin{align*}
\frac{\partial}{\partial t} \left( \| v(t) \|^{2}_{\mathrm{L}^{2}\left( \Omega \right)} \right)^{-\frac{2}{d}} 
\ & \geq \ \left( -\frac{2}{d}  \right) \left( \| v(t) \|^{2}_{\mathrm{L}^{2}\left( \Omega \right)} \right)^{-\frac{2}{d}-1}
 \left( - \frac{\alpha}{C_{d}} \right)  
\frac{ \| v(t) \|_{\mathrm{L}^{2}(\Omega)}^{2+\frac{4}{d}} }{ \| v(t) \|_{\mathrm{L}^{1}(\Omega)}^{\frac{4}{d}}  } \\
\ \\
& = \ \frac{2 \alpha}{d C_{d}} \ \| v(t) \|_{\mathrm{L}^{1}(\Omega)}^{-\frac{4}{d}} & \\
\ \\
& \geq \ \frac{2 \alpha }{d C_{d}} \ 4^{-4/d} \exp \left( -\frac{4t}{d}(\mu_{0} - \lambda_{0}) \right) 
\left\| u \right\|_{\mathrm{L}^{1} \left( \Omega \right)}^{-\frac{4}{d}} &
\end{align*}
is implied. Hence we conclude to
\begin{align*}
\| v(t) \|_{\mathrm{L}^{2}\left( \Omega \right)}^{-\frac{4}{d}} 
& = \ \int_{0}^{t}  \frac{\partial}{\partial t} \ \| v(t) \|_{\mathrm{L}^{2}\left( \Omega \right)}^{-\frac{4}{d}} \d t 
+ \| u \|_{\mathrm{L}^{2}\left( \Omega \right)}^{-\frac{4}{d}} & \\
\ \\
& \geq \  \frac{2 \alpha }{d C_{d}} \ 4^{-4/d} \exp \left( -\frac{4t}{d}(\mu_{0} - \lambda_{0}) \right) t
\left\| u \right\|_{\mathrm{L}^{1} \left( \Omega \right)}^{-\frac{4}{d}} & 
\end{align*} 
and end up with
\begin{equation}
\| \mathrm{e}^{-t\tilde{L}(B)^{\ast}} u \|_{\mathrm{L}^{2}\left( \Omega \right) } \ \leq \
4 \left( \frac{d C_{d}}{2 \alpha} \right)^{d/4} \ t^{-d/4} \ \exp \left( t(\mu_{0} - \lambda_{0}) \right) \
\left\| u \right\|_{\mathrm{L}^{1} \left( \Omega \right)}
\end{equation}
for any function $u \in D ( \tilde{L}(B)^{\ast} )$.
\item Note that $D ( \tilde{L}(B)^{\ast} )$ is a dense subset in $\mathrm{L}^{2} \left( \Omega \right)$ due to 
$\tilde{L}(B)^{\ast}$ being a generator of a $C_{0}$-semigroup in $\mathrm{L}^{2} \left( \Omega \right)$.
Furthermore $\Omega$ is bounded and therefore $D ( \tilde{L}(B)^{\ast} )$ is even a dense subset 
in $\mathrm{L}^{1} \left( \Omega \right)$ with respect to $\| \cdot \|_{\mathrm{L}^{1} \left( \Omega \right)}$. 
Finally we argue that there exists an unique extension of $\mathrm{e}^{-tL_{B}^{\ast}}$ to a continuous operator from 
$\mathrm{L}^{1}\left( \Omega \right)$ to $\mathrm{L}^{2}\left( \Omega \right)$ satisfying
$$
\exists C > 0 \ \forall u \in \mathrm{L}^{1} \left( \Omega \right) \ : \
\| \mathrm{e}^{-tL(B)^{\ast}} u \|_{\mathrm{L}^{2}\left( \Omega \right) } \ \leq \ 
C t^{-d/4} \ \mathrm{e}^{t\mu_{0}} \ \left\| u \right\|_{\mathrm{L}^{1} \left( \Omega \right)}
$$
where we used $\mathrm{e}^{-t\tilde{L}(B)^{\ast}} = \mathrm{e}^{-t\lambda_{0}} \mathrm{e}^{-tL(B)^{\ast}}$.
\end{enumerate}
\end{proof}

\subsection{A duality argumenation}

From Lemma \ref{adjoint_semigroup_L1_L2} we conclude that there exists an adjoint operator 
$T(t) \in \mathcal{L} \left( \mathrm{L}^{2}\left( \Omega \right), \mathrm{L}^{\infty}\left( \Omega \right)  \right)$ to 
$\mathrm{e}^{-tL(B)^{\ast}} \in 
\mathcal{L} \left( \mathrm{L}^{1}\left( \Omega \right), \mathrm{L}^{2}\left( \Omega \right)  \right)$ 
satisfying
\begin{enumerate} [i)]
\item
$\langle \mathrm{e}^{-tL(B)^{\ast}}u, v  \rangle = \langle u, T(t)v \rangle$
for every $u \in \mathrm{L}^{1}\left( \Omega \right)$ and $v \in \mathrm{L}^{2}\left( \Omega \right)$
\item
$\|  T(t) \|_{\mathrm{L}^{2} \to \mathrm{L}^{\infty}} = \|  \mathrm{e}^{-tL(B)^{\ast}} \|_{\mathrm{L}^{1} \to \mathrm{L}^{2}} 
\leq C t^{-d/4} \mathrm{e}^{t\mu_{0}}$.
\end{enumerate}
On the other hand $\mathrm{e}^{-tL(B)^{\ast}}$
is the adjoint operator of $\mathrm{e}^{-tL(B)}$ in $\mathrm{L}^{2} \left( \Omega \right)$ by Lemma 
\ref{generator_adjoint_semigroup}. Furthermore
$\mathrm{L}^{2} \left( \Omega \right)$ is a subset of $\mathrm{L}^{1} \left( \Omega \right)$ since $\Omega$ is bounded
and therefore we conclude
$$
\langle \mathrm{e}^{-tL(B)} u, v \rangle 
= \langle u, \mathrm{e}^{-tL(B)^{\ast}}v \rangle = \langle T(t)u, v \rangle  
$$
is true for every $u,v \in \mathrm{L}^{2} \left( \Omega \right)$. 
Hence $\mathrm{e}^{-tL(B)} u = T(t)u$ is implied
for every $u \in \mathrm{L}^{2} \left( \Omega \right)$ and any $t > 0$ which finally proves Theorem \ref{Main_result}.

\newpage

\appendix

\section{Useful results on forms and $C_{0}$-semigroups} \label{results_by_Ouhabaz}

Let us collect some known facts from \cite{Ouhabaz05} as a guideline to our own argumentation. Let $H$ be
a Hilbert space over $\mathbb{K} = \C$ or $\R$ and $a \colon D(a) \times D(a) \to \mathbb{K}$ be a sesquilinear form.

\begin{dfn} \ 
\begin{enumerate}[i.)]
\item We call $a$ densely defined if $D(a)$ is dense in $H$.
\item We call $a$ accretive if $\Re a(u,u) \geq 0$ holds for every $u \in D(a)$.
\item We call $a$ continuous if $a$ is accretive and there exists a constant $M \geq 0$ such that
$$
\left| a(u,v) \right| \leq M \| u \|_{a} \| v \|_{a}
$$
holds for every $u, v \in D(a)$ where  $\| u \|_{a} = \left( \Re a(u,u) + \| u \| \right)^{\frac{1}{2}}$.
\item We call $a$ a closed form if $\left( D(a), \| \cdot \|_{a} \right)$ is a complete space.
\item The adjoint form $a^{\ast}$ of $a$ is defined by
$$
a^{\ast} (u,v) = \overline{a(v,u)}
$$
for $u,v \in D(a^{\ast}) = D(a)$.
\end{enumerate}
\end{dfn}

Now, let the form $a \colon D(a) \times D(a) \to \mathbb{K}$ be densely defined, accretive, coninuous and closed. 
Then we define a subspace
$$
D(A) = \left\{ u \in D(a) \ | \ \exists v \in H : a(u, \varphi) = \langle v, \varphi \rangle \ \text{ holds for every } \varphi \in D(a) \right\}
$$ 
as the domain of an unbounded operator $A \colon D(A) \subset H \to H$ defined by the equation
$$
a(u, \varphi) =  \langle Au, \varphi \rangle
$$
for $u \in D(A)$ and every $\varphi \in D(a)$. We call $A$ the associated operator to the form $a$.\\

\begin{thm}\label{generator_semigroup}
Let $A$ be the associated operator to the form $a$. Then $D(A)$ is dense in $H$ and for every $\lambda > 0$ the operator
$\lambda + A$ is invertible such that
$$
\left\| \lambda \left( \lambda + A \right)^{-1} u \right\| \leq \| u \|
$$
holds for every $u \in H$. Hence $-A$ is the generator of a $C_{0}$-semigroup $\left( \mathrm{e}^{-tA} \right)_{t \geq 0}$
of contractions in $H$.\\
\end{thm}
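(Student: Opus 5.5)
The plan is the classical Lax--Milgram route combined with the Hille--Yosida theorem. Throughout, I read the form norm as $\| u \|_{a} = \left( \Re a(u,u) + \| u \|^{2} \right)^{1/2}$, which is the convention used in (\ref{H1_inequ}). Because $a$ is closed, $V = \left( D(a), \| \cdot \|_{a} \right)$ is a Hilbert space, with inner product $\langle u,v \rangle_{V} = \frac{1}{2}\left( a(u,v) + \overline{a(v,u)} \right) + \langle u, v \rangle$, and it embeds continuously in $H$.

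\textbf{Step 1: the perturbed form is coercive on $V$.} Fix $\lambda > 0$ and set $a_{\lambda}(u,v) = a(u,v) + \lambda \langle u, v\rangle$. Continuity of $a$ on $V$ gives $|a_{\lambda}(u,v)| \leq (M + \lambda) \| u \|_{a} \| v \|_{a}$. For coercivity I use accretivity:
$$
\Re a_{\lambda}(u,u) \geq \min(1,\lambda) \| u \|_{a}^{2} .
$$
For $f \in H$, the antilinear functional $\varphi \mapsto \langle f, \varphi \rangle$ is bounded on $V$, since $\| \varphi \| \leq \| \varphi \|_{a}$. The Lax--Milgram lemma therefore gives a unique $u \in D(a)$ with $a_{\lambda}(u,\varphi) = \langle f, \varphi \rangle$ for all $\varphi \in D(a)$. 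Equivalently, $a(u,\varphi) = \langle f - \lambda u, \varphi \rangle$, so $u \in D(A)$ and $(\lambda + A)u = f$. This shows $\lambda + A$ is surjective.

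\textbf{Step 2: injectivity and the contraction estimate.} If $(\lambda + A)u = f$, I test with $\varphi = u$:
$$
\Re a(u,u) + \lambda \| u \|^{2} = \Re \langle f, u \rangle \leq \| f \| \, \| u \| .
$$
Since $\Re a(u,u) \geq 0$, this yields $\lambda \| u \| \leq \| f \|$. Hence $\lambda + A$ is injective, its inverse is bounded, and $\| \lambda (\lambda + A)^{-1} \| \leq 1$. Boundedness of the resolvent also makes $A$ closed.

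\textbf{Step 3: density of $D(A)$.} I expect this to be the only slightly delicate point. The adjoint form $a^{\ast}$ has the same domain, is accretive, continuous and closed, with the same norm $\| \cdot \|_{a}$. Let $h \in H$ satisfy $\langle h, u \rangle = 0$ for all $u \in D(A)$. Applying Step 1 to $a^{\ast}$ with $\lambda = 1$, I obtain $v \in D(a)$ with $a^{\ast}(v,\varphi) + \langle v, \varphi \rangle = \langle h, \varphi \rangle$ for all $\varphi \in D(a)$. For $u \in D(A)$ this reads
$$
0 = \langle h, u \rangle = \overline{a(u,v) + \langle u, v \rangle} = \overline{\langle (1+A)u, v \rangle} .
$$
By Step 1 I may choose $u \in D(A)$ with $(1+A)u = v$, which gives $\| v \|^{2} = 0$. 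Then $\langle h, \varphi \rangle = 0$ for all $\varphi \in D(a)$. Since $D(a)$ is dense in $H$ by assumption, $h = 0$, so $D(A)$ is dense.

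\textbf{Step 4: generation.} The operator $-A$ is closed and densely defined, $(0,\infty) \subset \rho(-A)$, and $\| (\lambda + A)^{-1} \| \leq \lambda^{-1}$ for every $\lambda > 0$. The Hille--Yosida theorem then shows that $-A$ generates a $C_{0}$-semigroup of contractions.
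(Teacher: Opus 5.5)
Your proof is correct. It uses Lax--Milgram on the Hilbert space $(D(a),\|\cdot\|_a)$ for surjectivity of $\lambda + A$, the accretivity estimate for $\|\lambda(\lambda+A)^{-1}\|\leq 1$, the adjoint-form argument for density of $D(A)$, and Hille--Yosida for generation. You also read $\|u\|_a$ correctly as $(\Re a(u,u)+\|u\|^{2})^{1/2}$; the square is missing in the appendix's definition, and without it this would not be a norm.

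The paper gives no proof of its own: it lists this statement among known facts quoted from \cite{Ouhabaz05}. Your proposal is the standard textbook proof of that fact, so there is no competing route to compare. The only difference is that you make the cited result self-contained.
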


\begin{dfn}
Let $A \colon D(A) \subset H \to H$ be a densely defined operator in $H$. Then the adjoint operator $A^{\ast}$ to $A$ is defined
by
$$
D(A^{\ast}) = \left\{ u \in H \ | \ \exists v \in H : \langle A\varphi, u \rangle = \langle \varphi, v \rangle \ \text{ for every } 
\varphi \in D(A) \right\}
$$  
and $\langle A\varphi, u \rangle = \langle \varphi, A^{\ast}u \rangle$ for $u \in D(A^{\ast})$ and every $\varphi \in D(A)$.\\
\end{dfn}

\begin{lem}\label{generator_adjoint_semigroup}
The adjoint operator $A^{\ast}$ of $A$ is the associated operator to the adjoint form $a^{\ast}$ of $a$. Furthermore
 $- A^{\ast}$ is the generator of a $C_{0}$-semigroup $\left( \mathrm{e}^{-tA^{\ast}} \right)_{t \geq 0}$ in $H$ such that 
$$
\mathrm{e}^{-tA^{\ast}} = \left( \mathrm{e}^{-tA} \right)^{\ast}
$$
holds for every time $t \geq 0$.
\end{lem}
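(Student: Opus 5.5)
We follow the standard route through the resolvents. First we check that the adjoint form $a^{\ast}$ satisfies the same hypotheses as $a$: it is densely defined, accretive, continuous and closed, since $D(a^{\ast}) = D(a)$ and $\Re a^{\ast}(u,u) = \Re \overline{a(u,u)} = \Re a(u,u)$. In particular $\| \cdot \|_{a^{\ast}} = \| \cdot \|_{a}$, and the continuity estimate transfers because $|a^{\ast}(u,v)| = |a(v,u)|$. Let $A'$ denote the operator associated to $a^{\ast}$. By Theorem \ref{generator_semigroup}, $\lambda + A'$ is bijective from $D(A')$ onto $H$ for every $\lambda > 0$. The same theorem gives that $D(A)$ is dense, so $A^{\ast}$ is well defined.

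\emph{Step 1: $A' \subset A^{\ast}$.} Let $u \in D(A')$ and $\varphi \in D(A)$. Then
$$
\langle A\varphi, u \rangle = a(\varphi, u) = \overline{a^{\ast}(u,\varphi)} = \overline{\langle A'u, \varphi \rangle} = \langle \varphi, A'u \rangle ,
$$
so $u \in D(A^{\ast})$ and $A^{\ast}u = A'u$.

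\emph{Step 2: equality of domains.} We expect this to be the main point, since Step 1 alone gives only one inclusion. Fix $\lambda > 0$. Because $(\lambda + A)^{-1}$ is bounded and everywhere defined, the standard identity $(\lambda + A)^{\ast} = \lambda + A^{\ast}$ shows that $\lambda + A^{\ast}$ is bijective, with inverse $\bigl( (\lambda + A)^{-1} \bigr)^{\ast}$. Now $\lambda + A' \subset \lambda + A^{\ast}$, the operator $\lambda + A'$ is surjective, and $\lambda + A^{\ast}$ is injective. Take $w \in D(A^{\ast})$ and choose $u \in D(A')$ with $(\lambda + A')u = (\lambda + A^{\ast})w$. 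Injectivity of $\lambda + A^{\ast}$ gives $w = u \in D(A')$. Hence $A' = A^{\ast}$, so $A^{\ast}$ is associated to $a^{\ast}$. Theorem \ref{generator_semigroup} applied to $a^{\ast}$ then shows that $-A^{\ast}$ generates a contraction $C_{0}$-semigroup.

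\emph{Step 3: identification of the semigroups.} From Step 2 we have $(\lambda + A^{\ast})^{-1} = \bigl( (\lambda + A)^{-1} \bigr)^{\ast}$ for all $\lambda > 0$. We then use the exponential (Euler) formula
$$
\mathrm{e}^{-tA} x = \lim_{n \to \infty} \left( \tfrac{n}{t} \bigl( \tfrac{n}{t} + A \bigr)^{-1} \right)^{n} x ,
$$
which holds strongly, and the analogous formula for $A^{\ast}$. For $x, y \in H$ this gives
$$
\langle \mathrm{e}^{-tA^{\ast}} x, y \rangle = \lim_{n \to \infty} \left\langle x, \left( \tfrac{n}{t} \bigl( \tfrac{n}{t} + A \bigr)^{-1} \right)^{n} y \right\rangle = \langle x, \mathrm{e}^{-tA} y \rangle ,
$$
that is, $\mathrm{e}^{-tA^{\ast}} = \bigl( \mathrm{e}^{-tA} \bigr)^{\ast}$ for every $t \geq 0$. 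Alternatively, one can invoke the general fact that on a Hilbert space the adjoint of a $C_{0}$-semigroup is a $C_{0}$-semigroup generated by the adjoint of the generator.
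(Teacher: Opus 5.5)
Your proof is correct and complete. The paper does not prove this lemma itself; it lists it in the appendix among known facts quoted from Ouhabaz's monograph. Your argument is the standard one behind that citation: the inclusion $A' \subset A^{\ast}$ follows directly from the definitions. It becomes an equality because $\lambda + A'$ is surjective and $\lambda + A^{\ast}$ is injective, and the injectivity already follows from $\operatorname{Ran}(\lambda + A) = H$. The identity $\mathrm{e}^{-tA^{\ast}} = (\mathrm{e}^{-tA})^{\ast}$ then follows from $(\lambda + A^{\ast})^{-1} = \bigl((\lambda + A)^{-1}\bigr)^{\ast}$ and the Euler formula for $t > 0$; the case $t = 0$ is trivial. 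So you give a self-contained justification for a step the paper only takes from the literature.
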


\section{Nash's inequality on $H^{1}( \Omega )$}

Let $\Omega \subset \R^{d}$ for $d > 2$ be a bounded Lipschitz domain. We will present the details in the argumentation of Nash's inequality which reads
\begin{equation} \label{Nash_inequality}
\exists C_{d} > 0 \ \forall u \in H^{1}( \Omega ) \ : \
\| u \|_{\mathrm{L}^{2}(\Omega)}^{2+\frac{4}{d}} \leq C_{d} \| u \|_{\mathrm{L}^{1}(\Omega)}^{\frac{4}{d}} 
\| \nabla u \|_{\mathrm{L}^{2}(\Omega)^{d}}^{2}
\end{equation}
where we keep in mind that $H^{1}( \Omega ) \subset \mathrm{L}^{2} (\Omega)$ is also contained in 
$\mathrm{L}^{1} (\Omega)$ since $\Omega$ is bounded. We start to prove (\ref{Nash_inequality}) for 
$u \in C^{1}( \Omega )$ and use Hölder inequalities to conclude to
$$
\int_{\Omega} |u|^{2} \d x = \int_{\Omega} |u|^{\frac{4}{d+2}} |u|^{\frac{2d}{d+2}} \d x 
\leq \left( \int_{\Omega} |u| \d x \right)^{\frac{4}{d+2}} \left( \int_{\Omega} |u|^{\frac{2d}{d-2}} \d x \right)^{\frac{d-2}{d+2}}.
$$
Due to $\emptyset \neq \Omega \subset \R^{d}$ being a Lipschitz domain, we use a generalization of the 
Sobolev embedding theorem to infer that 
$$
\left( \int_{\Omega} |u|^{\frac{2d}{d-2}} \d x \right)^{\frac{d-2}{2d}} \leq C_{d} 
\left( \int_{\Omega} |\nabla u|^{2} \d x \right)^{\frac{1}{2}}
$$
is true where $C_{d} > 0$ is a constant independent of $u$. We conclude to  
$$
\int_{\Omega} |u|^{2} \d x \leq C_{d}^{\frac{2d}{d+2}} \left( \int_{\Omega} |u| \d x \right)^{\frac{4}{d+2}} 
\left( \int_{\Omega} |\nabla u|^{2} \d x \right)^{\frac{d}{d+2}}
$$
which implies 
$\| u \|_{\mathrm{L}^{2}(\Omega)}^{2+\frac{4}{d}} \leq \tilde{C}_{d} \| u \|_{\mathrm{L}^{1}(\Omega)}^{\frac{4}{d}} 
\| \nabla u \|_{\mathrm{L}^{2}(\Omega)^{d}}^{2}$
for $\tilde{C}_{d} = C_{d}^{2} > 0$. Due to a density argumentation we have shown (\ref{Nash_inequality}) on $H^{1}( \Omega )$.\\

\newpage

\bibliography{references}
\bibliographystyle{plain}

\end{document}